\newtheorem{theorem}{Theorem}[section]
\newtheorem{lemma}[theorem]{Lemma}
\newtheorem{corollary}[theorem]{Corollary}
\newtheorem{example}[theorem]{Example}
\newtheorem{proposition}[theorem]{Proposition}
\title{Generalizations of free monoids}
\author{Mark V. Lawson}
\address{Mark V. Lawson, Department of Mathematics
and the
Maxwell Institute for Mathematical Sciences, 
Heriot-Watt University,
Riccarton,
Edinburgh EH14 4AS, 
UNITED KINGDOM}
\email{m.v.lawson@hw.ac.uk}
\author{Alina Vdovina}
\address{Alina Vdovina, 
Department of Mathematics,
The City College of New York,
160 Convent Avenue,
New York, NY 10031,
USA}
\email{avdovina@ccny.cuny.edu}
\begin{document} 
\dedicatory{This paper is dedicated to Gracinda Gomes, colleague and friend, on the occasion of her retirement.}

\begin{abstract} 
We generalize free monoids by defining $k$-monoids. 
These are nothing other 
than the one-vertex higher-rank graphs used in $C^{\ast}$-algebra theory with the cardinality requirement waived.
The $1$-monoids are precisely the free monoids. 
We then take the next step and generalize $k$-monoids
in such a way that self-similar group actions yield monoids of this type.
\end{abstract}
\maketitle

\section{Introduction}

The goal of this paper is to generalize free monoids to higher dimensions.
We make little pretence of novelty (except, perhaps, in Section 5 and Example~\ref{ex:av1} and Example~\ref{ex:av2}) 
since the monoids considered in this paper 
are nothing other than the one-vertex higher-rank graphs with the usual cardinality restriction waived.
Higher-rank graphs were introduced in \cite{KP} (formalizing some ideas to be found in \cite{RS})
and the monoids within this class have been considered by a number of authors, such as \cite{DPY, DY2009b}.
They are well-known within the operator algebra community,
but, we maintain, they should also be interesting to those working within semigroup theory.
Whereas free monoids are concretely monoids of strings, our monoids will have elements that
we can regard as `higher-dimensional strings'; for example, in two dimensions
our elements can be regarded as rectangles.
Our generalization of free monoids is called $k$-monoids;
the $1$-monoids will turn out to be precisely the free monoids.
Classes of $k$-monoids were studied in \cite{LV2020} where they were used to construct groups via inverse semigroups.
This work is summarized in Section 4.
Our point of view is that any result for free monoids should be generalized to $k$-monoids.\\

\noindent
{\bf Acknowledgements }The authors would like to thank Aidan Sims for his comments on an earlier draft of this paper.
Whilst this paper was being prepared, the authors were sad to learn of the passing of Iain Raeburn.
Iain contributed greatly to the theory of $C^{\ast}$-algebras in general, and higher-rank graphs in particular.
The authors would also like to thank the anonymous referee for a careful reading
of a draft of this paper and many constructive suggestions.\\

\section{Free monoids}

You can read all about free monoids in Lallement's book \cite[Chapter 5]{Lallement} but we shall go over what we need here.
Our goal is to motivate the definition of $k$-monoids which will be given in the next section.
Let $A$ be any set, called in this context an {\em alphabet}, whose elements will be called {\em letters.}
We do not need to assume that $A$ is finite and, although it could be empty, that is not a very interesting case.
By a {\em string} over $A$ we mean a finite sequence of elements of $A$.
We shall dispense with brackets and so strings shall simply be written as words over the alphabet $A$.
The empty string is denoted by $\varepsilon$.
The set of all strings over $A$ is denoted by $A^{\ast}$.
The set $A^{\ast}$ becomes a semigroup with operation $\cdot$ when we combine strings via concatenation:
thus $x \cdot y = xy$.
This really makes $A^{\ast}$ into a semigroup and, in fact, a monoid with identity $\varepsilon$.
As usual, we shall omit explicit reference to the semigroup operation $\cdot$.
Monoids that are isomorphic to the monoids $A^{\ast}$ are called {\em free monoids}.
Free monoids are even simpler than free groups since there are no inverses to deal with.
The elements of $A$ are called the {\em generators} of the free monoid.
The simplest interesting free monoids are those with exactly one generator;
such monoids are isomorphic to the monoid $(\mathbb{N},+)$.
So, arbitrary free monoids can be regarded as non-commutative arithmetic.
The case where $A$ is empty is special: the free monoid on no generators is just the one-element monoid.

Free monoids have some important algebraic properties but
to state these, we need some definitions.
We say that a monoid $S$ is {\em equidivisible} if $xy = uv$ in $S$ 
implies there exists an element $t \in S$ such that
either $u = xt$ and $y = tv$ or $x = ut$ and $v = ty$.
If $x$ is any string in a free monoid, denote its {\em length} by $|x|$;
this simply counts the total number of letters of $A$ occurring in $x$ including multiplicities.
Free monoids are {\em cancellative}: this means that if $xy = xv$ then $y = v$
and if $xy = uy$ then $x = u$.
The following was proved as  \cite[Chapter 5, Proposition 1.5]{Lallement}.

\begin{lemma}\label{lem:darwin} 
Free monoids are equidivisible.
\end{lemma}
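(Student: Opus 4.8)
The plan is to argue directly in the string model $A^{\ast}$, exploiting the fact that concatenation merely juxtaposes sequences of letters, so that equality of two products is equality of the underlying letter-sequences position by position. Suppose $xy = uv$ in $A^{\ast}$. Since the length counts the same total number of letters on both sides, $|x| + |y| = |u| + |v|$. The two alternatives in the definition of equidivisibility are symmetric under interchanging the pair $(x,y)$ with $(u,v)$, so it is enough to treat the case $|x| \le |u|$; the remaining case $|u| < |x|$ follows by the same argument with the roles swapped.

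Assume then $|x| \le |u|$. Reading the common string $w = xy = uv$ letter by letter, its first $|x|$ letters are precisely the letters of $x$. Because $|x| \le |u|$, those same letters form an initial segment of $u$, so $u$ begins with $x$; writing $t$ for the string consisting of the letters of $u$ in positions $|x|+1, \dots, |u|$ (the empty string $\varepsilon$ when $|x| = |u|$), we obtain $u = xt$, and $t$ is a genuine element of $A^{\ast}$. It remains to identify $y$. Substituting $u = xt$ gives $xy = uv = (xt)v = x(tv)$, and since free monoids are cancellative, left cancellation of $x$ yields $y = tv$. Thus $u = xt$ and $y = tv$, which is the first of the two required conclusions.

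Alternatively, once $u = xt$ is known, the equation $y = tv$ can be read off directly without invoking cancellation: in the word $w$ the letters beyond position $|x|$ constitute $y$ on the one hand and $tv$ on the other, so these two strings coincide. Either route closes the case, and the symmetric argument (exchanging $x$ with $u$) produces $x = ut$ and $v = ty$ when $|u| < |x|$.

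I do not expect a genuine obstacle here: the whole content of the lemma is captured by the length comparison, and the only points needing care are bookkeeping ones, namely verifying that the overhang string $t$ is well defined (including the degenerate case $t = \varepsilon$) and that the split into $|x| \le |u|$ and $|u| < |x|$ is exhaustive. The symmetry remark is what keeps the write-up short, since otherwise one would simply repeat the prefix argument verbatim with $x$ and $u$ interchanged.
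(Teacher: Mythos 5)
Your argument is correct and is exactly the standard prefix-comparison proof; the paper itself gives no proof here but simply cites \cite[Chapter 5, Proposition 1.5]{Lallement}, where this same argument appears. Both the well-definedness of the overhang $t$ (including $t=\varepsilon$) and the reduction by symmetry are handled properly, so there is nothing to add.
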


Free monoids come equipped with a monoid homomorphism $\delta$ which associates with a string its length.
Thus, there is a monoid homomorphism $\delta \colon A^{\ast} \rightarrow \mathbb{N}$ given by $\delta (x) = |x|$.
It is possible to characterize free monoids by means of the properties of the monoid homomorphism $\delta$.
The following \cite[Chapter 5, Corollary 1.6]{Lallement} was first proved by F. W. Levi in \cite{Levi}.

\begin{theorem}[F. W. Levi, 1944]\label{them:levi} A monoid $S$ is free if and only if $S$ is equidivisible
and there exists a monoid homomorphism $\theta \colon S \rightarrow \mathbb{N}$ such that $\theta^{-1}(0)$
is just the identity of $S$.
\end{theorem}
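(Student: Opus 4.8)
The plan is to prove Levi's theorem in two directions. For the forward direction, suppose $S$ is free, so $S \cong A^{\ast}$ for some alphabet $A$. Then equidivisibility is exactly Lemma~\ref{lem:darwin}, so nothing more is needed there. For the homomorphism, I would take $\theta = \delta$, the length map $\delta \colon A^{\ast} \rightarrow \mathbb{N}$ sending each string to its length $|x|$. This is a monoid homomorphism because $|xy| = |x| + |y|$, and $\delta^{-1}(0)$ consists precisely of the strings of length zero, which is just the empty string $\varepsilon$, the identity of $S$. So the forward direction is essentially immediate from the material already set up.

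The substance of the theorem is the converse: given an equidivisible monoid $S$ equipped with a homomorphism $\theta \colon S \rightarrow \mathbb{N}$ with $\theta^{-1}(0) = \{1\}$, I must produce a free basis. The natural candidate for the alphabet $A$ is the set of \emph{atoms} (or irreducible elements) of $S$: those $a \neq 1$ that cannot be written as a product $a = uv$ with both $u \neq 1$ and $v \neq 1$. The map $\theta$ is what makes this work, since $\theta$ is a length-like gauge. First I would record that $\theta(x) = 0$ forces $x = 1$, and conversely $\theta(1) = 0$; hence every non-identity element has $\theta \geq 1$. If $x = uv$ with $u, v \neq 1$, then $\theta(x) = \theta(u) + \theta(v)$ with both summands at least $1$, so $\theta(x) > \theta(u)$ and $\theta(x) > \theta(v)$. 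This strict drop in $\theta$-value under proper factorization gives a well-founded induction: every non-identity element factors into a product of atoms, by inducting on $\theta(x)$.

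The main obstacle is \emph{uniqueness} of the factorization into atoms, and this is precisely where equidivisibility must be deployed. Suppose $a_1 \cdots a_m = b_1 \cdots b_n$ are two factorizations into atoms. I would induct on $\theta$ of the common value (or on $m$). Applying equidivisibility to $a_1 (a_2 \cdots a_m) = b_1 (b_2 \cdots b_n)$ yields $t$ with either $b_1 = a_1 t$ and $a_2 \cdots a_m = t \, b_2 \cdots b_n$, or the symmetric case $a_1 = b_1 t$ and $b_2 \cdots b_n = t \, a_2 \cdots a_m$. In the first case, since $b_1$ is an atom and $b_1 = a_1 t$ with $a_1 \neq 1$, irreducibility of $b_1$ forces $t = 1$ (using $\theta(a_1) \geq 1$ to rule out $a_1 = 1$), whence $a_1 = b_1$; the symmetric case is dual. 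Cancelling the common leading atom $a_1 = b_1$ then requires left cancellativity, so the careful point is that I must first \emph{derive} cancellativity of $S$ from equidivisibility together with the gauge $\theta$, rather than assuming it. I would establish left cancellativity by a similar equidivisibility argument on $ax = ay$, tracking $\theta$-values to force the intermediate element to be the identity; then induction finishes uniqueness. Once existence and uniqueness of atom-factorizations are in hand, the map from $A^{\ast}$ to $S$ sending a word of atoms to its product is a well-defined bijective homomorphism, exhibiting $S$ as free on its set of atoms.
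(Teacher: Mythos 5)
The paper does not actually prove this theorem---it simply cites \cite[Chapter 5, Corollary 1.6]{Lallement}---and your argument is a correct and complete rendition of that standard proof: take the free basis to be the atoms, get existence of atom-factorizations by induction on $\theta$, and get uniqueness from equidivisibility. One small simplification: the cancellativity detour is unnecessary, because equidivisibility hands you both $b_1 = a_1 t$ \emph{and} $a_2 \cdots a_m = t\, b_2 \cdots b_n$ simultaneously, so once atomicity forces $t = 1$ the truncated equation $a_2 \cdots a_m = b_2 \cdots b_n$ is already in hand and the induction closes without cancelling anything.
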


\begin{quote}
{\em We shall use the above theorem to obtain a new characterization of free monoids that will motivate this paper.}\\
\end{quote}

Let $x \in A^{\ast}$.
Then $|x| \in \mathbb{N}$.
Suppose that $m,n \in \mathbb{N}$ such that $m + n = |x|$.
Then there are {\em unique} elements $u,v \in A^{\ast}$ such that $x = uv$
where $|u| = m$ and $|v| = n$.
To see that this is true, just remember that the elements of the free monoid are strings.

More generally, we say that a monoid $S$ has the {\em unique factorization property (UFP)} if it is equipped with
a monoid homomorphism $\theta \colon S \rightarrow \mathbb{N}$ such that
if $\theta (a) = m + n$ then there are unique elements $b,c \in S$ such that $a = bc$ where $\theta (b) = m$ and $\theta (c) = n$.
We now look at the consequences of the (UFP) for the structure of the monoid.

\begin{lemma}\label{lem:basic-UFP} Let $S$ be a monoid and let $\theta \colon S \rightarrow \mathbb{N}$
be a monoid homomorphism that satisfies the (UFP). Then 
\begin{enumerate}
\item $S$ is equidivisible.
\item $\theta^{-1}(0)$ is just the identity of $S$. 
\end{enumerate}
\end{lemma}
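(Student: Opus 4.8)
The plan is to prove each of the two claims directly from the (UFP), using the homomorphism $\theta$.

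For part (2), I would argue that $\theta^{-1}(0)$ contains only the identity. Let $1$ denote the identity of $S$. Since $\theta$ is a monoid homomorphism, $\theta(1) = 0$, so $1 \in \theta^{-1}(0)$. For the reverse inclusion, suppose $a \in S$ with $\theta(a) = 0$. Writing $0 = 0 + 0$, the (UFP) guarantees \emph{unique} elements $b,c \in S$ with $a = bc$, $\theta(b) = 0$, and $\theta(c) = 0$. But $a = a \cdot 1 = 1 \cdot a$, and both of these factorizations have both factors of $\theta$-value $0$; by uniqueness these must coincide, forcing $a = 1$. This is the easy half, and the key idea is simply to exploit the uniqueness clause against the two obvious factorizations of $a$ through the identity.

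For part (1), suppose $xy = uv$ in $S$; I must produce a $t$ as in the definition of equidivisibility. Set $a = xy = uv$, so $\theta(a) = \theta(x) + \theta(y) = \theta(u) + \theta(v)$. The natural move is to factor $a$ at the level $\theta(x)$. Without loss of generality assume $\theta(x) \leq \theta(u)$ (the other case is symmetric and yields the second alternative in the definition). Then I can write $\theta(u) = \theta(x) + m$ for some $m \in \mathbb{N}$, and correspondingly $\theta(a) = \theta(x) + (m + \theta(v))$. Applying the (UFP) to the splitting $\theta(a) = \theta(x) + (\theta(y))$ gives a unique factorization of $a$ with first factor of $\theta$-value $\theta(x)$; since $a = xy$ realizes exactly this, the first factor is forced to be $x$, and the second factor is $y$. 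On the other hand, refining the factorization $a = uv$ by splitting $u$ itself as $u = xt$ (via another application of the (UFP) to $\theta(u) = \theta(x) + m$) should let me identify $t$ and check $y = tv$.

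The main obstacle will be organizing the two nested applications of the (UFP) so that the uniqueness clauses genuinely pin down the elements rather than merely asserting existence. Concretely, I expect to apply the (UFP) once to $a$ at level $\theta(x)$ and once to $u$ at level $\theta(x)$, and then argue that the first factor agreeing forces the matching of tails. The care needed is to ensure that when I split $u = xt$ and substitute into $uv = xtv$, the resulting factorization of $a$ at level $\theta(x)$ has first factor $x$, so that uniqueness identifies its tail $tv$ with $y$; this gives exactly $u = xt$ and $y = tv$, the first alternative. The symmetric case $\theta(u) \leq \theta(x)$ produces $x = ut$ and $v = ty$, so both alternatives of equidivisibility are covered.
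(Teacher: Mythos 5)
Your proof is correct and follows essentially the same route as the paper: part (2) plays the uniqueness clause of the (UFP) off against the two trivial factorizations $a = a\cdot 1 = 1\cdot a$, and part (1) splits $u$ at level $\theta(x)$ and then uses uniqueness of the factorization of $xy = uv$ at that level to identify the tails. If anything, your version is slightly more careful than the paper's, which writes $u = xt$ directly without spelling out that the first factor of the (UFP) decomposition of $u$ must be identified with $x$ via the uniqueness clause applied to the whole product.
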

\begin{proof} (1) Suppose that $xy = uv$.
We shall compare $\theta (x)$ with $\theta (u)$.
The set $\mathbb{N}$ is linearly ordered, so that either $\theta (x) < \theta (u)$
or $\theta (x) \geq \theta (u)$.
Suppose first that $\theta (x) < \theta (u)$.
Then, we can write $\theta (u) = \theta (x) + n$, for some natural number $n$.
By the (UFP), we can write $u = xt$ where $t \in S$ is the unique element such that $\theta (t) = n$.
It follows that $xy = xtv$.
Take $\theta$ of both sides, and use basic algebra, to get that
$\theta (y) = \theta (t) + \theta (v)$.
By the (UFP), it follows that $y = tv$.
The case where $\theta (x) \geq \theta (u)$ can be handled similarly.

(2) Because $\theta$ is a monoid homomorphism, we have that $\theta (1) = \mathbf{0}$.
Now, let $e \in S$ be such that $\theta (e) = \mathbf{0}$.
But $\theta (e) = \mathbf{0} + \mathbf{0}$.
By the (UFP), we may write $e = e_{1}e_{2}$ uniquely where $\theta (e_{1}) = \mathbf{0}$ and $\theta (e_{2}) = \mathbf{0}$.
But $e = 1e = e1$, also since $1$ is the identity.
It follows by uniqueness that $e_{1} = 1$ and $e_{2} = 1$.
We deduce that $e = 1$.
\end{proof}

We can now provide a different characterization of free monoids
which follows immediately by Theorem~\ref{them:levi} and Lemma~\ref{lem:basic-UFP}.

\begin{theorem}\label{them:one} 
Let $S$ be a monoid equipped with a monoid homomorphism $S \rightarrow \mathbb{N}$
that satisfies the (UFP).
Then $S$ is free. 
\end{theorem}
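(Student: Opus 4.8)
The plan is to recognize this statement as an immediate corollary of the two preceding results, so that the only real task is to check that the hypotheses line up. Suppose $S$ is a monoid equipped with a monoid homomorphism $\theta \colon S \rightarrow \mathbb{N}$ satisfying the (UFP). Levi's theorem, Theorem~\ref{them:levi}, tells us that in order to conclude that $S$ is free it suffices to verify two things: that $S$ is equidivisible, and that there is some monoid homomorphism to $\mathbb{N}$ whose fibre over $0$ is exactly the identity. Both of these are supplied directly by Lemma~\ref{lem:basic-UFP}, applied to the very homomorphism $\theta$ we are handed.

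So the steps I would carry out are as follows. First I would invoke part (1) of Lemma~\ref{lem:basic-UFP} to obtain that $S$ is equidivisible; then I would invoke part (2) to obtain that $\theta^{-1}(0)$ is just the identity of $S$; and finally I would feed these two facts, together with the homomorphism $\theta$ itself, into Theorem~\ref{them:levi} to conclude that $S$ is free. No further hypotheses of Levi's theorem need be checked, since equidivisibility and the fibre condition are precisely its two requirements.

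The one point worth pausing over --- and the closest thing here to an obstacle --- is that Levi's theorem asks for a single homomorphism witnessing the fibre condition, whereas one might worry that equidivisibility and the fibre condition could in principle require different maps. This is not an issue: Lemma~\ref{lem:basic-UFP} derives both of its conclusions from one and the same $\theta$, namely the (UFP) homomorphism, so that this same $\theta$ slots directly into the role required by Theorem~\ref{them:levi}. Because all of the genuine work has already been done in establishing Lemma~\ref{lem:basic-UFP} and in Levi's theorem, the present statement demands nothing beyond assembling the two.
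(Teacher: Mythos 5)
Your proposal is correct and matches the paper exactly: the paper states that Theorem~\ref{them:one} "follows immediately by Theorem~\ref{them:levi} and Lemma~\ref{lem:basic-UFP}", which is precisely the assembly you describe. Your remark that the same homomorphism $\theta$ witnesses both conditions is a sensible check, though the paper does not dwell on it.
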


The above theorem directly motivates the definition of the next section.

\section{$k$-monoids} 

Theorem~\ref{them:one} sets the scene for what we shall do in this section:
essentially, we shall replace $\mathbb{N}$ by $\mathbb{N}^{k}$.
We shall need a little notation first.
If $\mathbf{m} \in \mathbb{N}^{k}$ then 
$$\mathbf{m} = (m_{1}, \ldots, m_{i}, \ldots, m_{k})$$ 
and we define $\mathbf{m}_{i} = m_{i}$, called the {\em components} of $\mathbf{m}$.
The partial order in $\mathbb{N}^{k}$ is defined componentwise: $\mathbf{m} \leq \mathbf{n}$ if and only if $\mathbf{m}_{i} \leq \mathbf{n}_{i}$ for every $i$ in the range $1 \leq i \leq k$.
The join operation is $(\mathbf{m} \vee \mathbf{n})_{i} = \mbox{max}(m_{i},n_{i})$ and the meet operation is $(\mathbf{m} \wedge \mathbf{n})_{i} = \mbox{min}(m_{i},n_{i})$.
Put $\mathbf{0} = (0, \ldots, 0)$ and $\mathbf{1} = (1, \ldots, 1)$ both elements of $\mathbb{N}^{k}$.
Define $\mathbf{e}_{i}$, where $1 \leq i \leq k$, to be that element of $\mathbb{N}^{k}$ which is zero everywhere except at $i$ where it takes the value $1$.\\

\noindent
{\bf NB:} The set $\mathbb{N}^{k}$  is equipped with a lattice structure,
where the meet and join operations operations are defined as above.\\

\noindent
{\bf Definition. }A monoid $S$ is said to be a {\em $k$-monoid} if there is a monoid homomorphism
$\delta \colon S \rightarrow \mathbb{N}^{k}$ satisfying the {\em unique factorization property (UFP)}:
if $\delta (x) = \mathbf{m} + \mathbf{n}$ then there exist unique elements $x_{1}$ and $x_{2}$ of $S$ 
such that $x = x_{1}x_{2}$ where $\delta (x_{1}) = \mathbf{m}$ and $\delta (x_{2}) = \mathbf{n}$.
We call $\delta (a)$ the {\em size} of $a$.\\

\noindent
{\bf NB:} Observe that a $k$-monoid is defined with respect to a given
monoid homomorphism $\delta \colon S \rightarrow \mathbb{N}^{k}$.\\

Using the terminology we have just introduced, we proved in the previous section that the $1$-monoids are precisely the free monoids.
Thus $k$-monoids really do generalize free monoids.
However, the direct product of free monoids is not usually free,
but one particularly pleasant feature of $k$-monoids is that they are closed under finite direct products.
To see this, let $S$ be a $k$-monoid and let $T$ be an $l$-monoid.
We shall denote their respective monoid homomorphisms by 
$\delta_{S} \colon S \rightarrow \mathbb{N}^{k}$ 
and 
$\delta_{T} \colon T \rightarrow \mathbb{N}^{l}$.
There is a natural isomorphism $\mu \colon \mathbb{N}^{k} \times \mathbb{N}^{l} \rightarrow \mathbb{N}^{k + l}$
where we take the ordered pair $((m_{1}, \ldots, m_{k}),(n_{1},\ldots,n_{l}))$
to the single $(k + l)$-tuple $(m_{1},\ldots, m_{k},n_{1}, \ldots, n_{l})$.
Define $\delta \colon S \times T \rightarrow \mathbb{N}^{k+l}$ 
by $\delta (s,t) = \mu (\delta_{S}(s),\delta_{T}(t))$.
In this way, it is easy to prove that $S \times T$ is a $(k + l)$-monoid.
We have therefore established the following \cite[Proposition 1.8]{KP}.

\begin{lemma}\label{lem:prod} 
If $S$ is a $k$-monoid and $T$ is an $l$-monoid then $S \times T$ is a $k+l$-monoid.
\end{lemma}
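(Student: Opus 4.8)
The plan is to take the homomorphism $\delta \colon S \times T \rightarrow \mathbb{N}^{k+l}$ constructed immediately before the statement, namely $\delta(s,t) = \mu(\delta_{S}(s), \delta_{T}(t))$, and verify that it satisfies the (UFP). That $\delta$ is a monoid homomorphism is routine, following at once from the facts that $\delta_{S}$ and $\delta_{T}$ are homomorphisms and $\mu$ is an isomorphism. The real content is the (UFP), and the whole argument turns on a single observation: since $\mu$ is an \emph{isomorphism} of monoids, addition in $\mathbb{N}^{k+l}$ corresponds under $\mu^{-1}$ to coordinatewise addition in $\mathbb{N}^{k} \times \mathbb{N}^{l}$. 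Thus a prescribed splitting of the size of $(s,t)$ decouples into an independent splitting of its $S$-part and its $T$-part.

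Concretely, I would start from an element $(s,t)$ together with a decomposition $\delta(s,t) = \mathbf{p} + \mathbf{q}$ in $\mathbb{N}^{k+l}$. Applying $\mu^{-1}$, write $\mathbf{p} = \mu(\mathbf{m}_{1}, \mathbf{n}_{1})$ and $\mathbf{q} = \mu(\mathbf{m}_{2}, \mathbf{n}_{2})$ with $\mathbf{m}_{i} \in \mathbb{N}^{k}$ and $\mathbf{n}_{i} \in \mathbb{N}^{l}$. Comparing $\delta(s,t) = \mu(\delta_{S}(s), \delta_{T}(t))$ with $\mathbf{p} + \mathbf{q} = \mu(\mathbf{m}_{1} + \mathbf{m}_{2}, \mathbf{n}_{1} + \mathbf{n}_{2})$ and using injectivity of $\mu$ forces $\delta_{S}(s) = \mathbf{m}_{1} + \mathbf{m}_{2}$ and $\delta_{T}(t) = \mathbf{n}_{1} + \mathbf{n}_{2}$. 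Now I would invoke the (UFP) in each factor: since $S$ is a $k$-monoid there are unique $s_{1}, s_{2} \in S$ with $s = s_{1}s_{2}$, $\delta_{S}(s_{1}) = \mathbf{m}_{1}$ and $\delta_{S}(s_{2}) = \mathbf{m}_{2}$, and since $T$ is an $l$-monoid there are unique $t_{1}, t_{2} \in T$ with $t = t_{1}t_{2}$, $\delta_{T}(t_{1}) = \mathbf{n}_{1}$ and $\delta_{T}(t_{2}) = \mathbf{n}_{2}$. Putting $x_{1} = (s_{1}, t_{1})$ and $x_{2} = (s_{2}, t_{2})$ then gives $(s,t) = x_{1}x_{2}$ with $\delta(x_{1}) = \mathbf{p}$ and $\delta(x_{2}) = \mathbf{q}$, which is existence.

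For uniqueness, I would suppose $(s,t) = (s',t')(s'',t'')$ is any factorization with $\delta(s',t') = \mathbf{p}$ and $\delta(s'',t'') = \mathbf{q}$. Reading off the two coordinates gives factorizations $s = s's''$ with $\delta_{S}(s') = \mathbf{m}_{1}$ and $\delta_{S}(s'') = \mathbf{m}_{2}$, and $t = t't''$ with $\delta_{T}(t') = \mathbf{n}_{1}$ and $\delta_{T}(t'') = \mathbf{n}_{2}$; uniqueness in $S$ and in $T$ then forces $(s',t') = x_{1}$ and $(s'',t'') = x_{2}$. I do not expect a genuine obstacle here, since the argument is essentially bookkeeping; the only point demanding care is keeping the $\mathbb{N}^{k}$ and $\mathbb{N}^{l}$ coordinates cleanly separated under $\mu$, and the single fact that makes everything work is precisely that $\mu$ preserves the operation, so that the lone decomposition $\mathbf{p} + \mathbf{q}$ can be reinterpreted as a matching pair of decompositions in the two factors.
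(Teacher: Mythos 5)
Your proposal is correct and follows exactly the route the paper takes: the paper defines the same map $\delta(s,t) = \mu(\delta_{S}(s),\delta_{T}(t))$ just before the lemma and declares the verification of the (UFP) ``easy to prove''; you have simply supplied the bookkeeping it omits. No discrepancy.
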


The proof of the following is immediate by the above lemma.

\begin{corollary}\label{cor:kfree-is-kmonoid} Let $A_{1}, \ldots, A_{k}$ be $k$ alphabets.
Then $A_{1}^{\ast} \times \ldots \times A_{k}^{\ast}$ is a $k$-monoid.
\end{corollary}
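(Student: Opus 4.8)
The plan is to proceed by a straightforward induction on $k$, using Lemma~\ref{lem:prod} as the inductive engine. The base case $k = 1$ is simply the observation, already made in Section~2, that a free monoid $A_{1}^{\ast}$ is a $1$-monoid: the length homomorphism $\delta \colon A_{1}^{\ast} \rightarrow \mathbb{N}$ given by $\delta (x) = |x|$ satisfies the (UFP), because a string $x$ with $|x| = m + n$ factors uniquely as $x = uv$ with $|u| = m$ and $|v| = n$ (one simply reads off the first $m$ letters). Thus the characterization of $1$-monoids as free monoids supplies everything needed to start the induction.

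For the inductive step I would regroup the product as
\[
A_{1}^{\ast} \times \cdots \times A_{k}^{\ast} = \left( A_{1}^{\ast} \times \cdots \times A_{k-1}^{\ast} \right) \times A_{k}^{\ast},
\]
noting that the direct product of monoids is associative up to the obvious isomorphism, so that this regrouping is harmless (being a $k$-monoid is preserved under isomorphism, with the grading homomorphism transported along the isomorphism). By the induction hypothesis the first factor is a $(k-1)$-monoid, and by the base case $A_{k}^{\ast}$ is a $1$-monoid. Applying Lemma~\ref{lem:prod} with $S = A_{1}^{\ast} \times \cdots \times A_{k-1}^{\ast}$ and $T = A_{k}^{\ast}$ then yields that the product is a $\bigl( (k-1) + 1 \bigr) = k$-monoid, which closes the induction.

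There is no genuine obstacle here, since all the content has been absorbed into Lemma~\ref{lem:prod} together with the identification of free monoids as $1$-monoids. The only point deserving a moment's care is the bookkeeping: one should confirm that the grading homomorphism into $\mathbb{N}^{k}$ produced by iterating the construction of Lemma~\ref{lem:prod} is, under the canonical isomorphism $\mathbb{N} \times \cdots \times \mathbb{N} \cong \mathbb{N}^{k}$, exactly the map sending $(x_{1}, \ldots, x_{k})$ to $(|x_{1}|, \ldots, |x_{k}|)$. This is immediate from the definition of $\delta$ in Lemma~\ref{lem:prod}, but it is worth recording, as it makes transparent why the (UFP) holds on the product: a prescribed splitting of the size vector in $\mathbb{N}^{k}$ splits each coordinate independently, and each coordinate split is unique by freeness of the corresponding factor.
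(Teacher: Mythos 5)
Your proof is correct and follows exactly the route the paper intends: the paper simply declares the corollary ``immediate'' from Lemma~\ref{lem:prod}, and your induction on $k$, with free monoids as $1$-monoids for the base case, is the obvious unpacking of that remark. No further comment is needed.
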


We say that a monoid $S$ is {\em singly aligned} if $aS \cap bS \neq \varnothing$ implies that $aS \cap bS = cS$ for some $c \in S$.
The proof of the following is easy.

\begin{lemma}\label{lem:needed-later} 
The product of two singly aligned monoids is singly aligned.
\end{lemma}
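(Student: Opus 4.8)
The plan is to reduce the claim about $S \times T$ to the single-alignment hypotheses on the two factors by computing principal right ideals componentwise. The key observation, which I would establish first, is that for $(a_1,a_2) \in S \times T$ the principal right ideal factors as a Cartesian product:
\[
(a_1,a_2)(S \times T) = a_1 S \times a_2 T.
\]
This is immediate from $(a_1,a_2)(s,t) = (a_1 s, a_2 t)$, since as $(s,t)$ ranges over $S \times T$ the image $(a_1 s, a_2 t)$ ranges over $a_1 S \times a_2 T$.

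Given this, I would take arbitrary elements $(a_1,a_2)$ and $(b_1,b_2)$ of $S \times T$ and compute the intersection of their principal right ideals using the elementary set identity $(X_1 \times Y_1) \cap (X_2 \times Y_2) = (X_1 \cap X_2) \times (Y_1 \cap Y_2)$:
\[
(a_1,a_2)(S \times T) \cap (b_1,b_2)(S \times T) = (a_1 S \cap b_1 S) \times (a_2 T \cap b_2 T).
\]

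Now I would assume this intersection is nonempty. The step I regard as the crux, though it is still elementary, is the observation that a Cartesian product of two sets is nonempty if and only if both factors are nonempty. This is precisely what lets the hypotheses on $S$ and $T$ be invoked separately: it forces $a_1 S \cap b_1 S \neq \varnothing$ and $a_2 T \cap b_2 T \neq \varnothing$. Single-alignment of $S$ then supplies $c_1 \in S$ with $a_1 S \cap b_1 S = c_1 S$, and single-alignment of $T$ supplies $c_2 \in T$ with $a_2 T \cap b_2 T = c_2 T$.

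Finally I would reassemble the pieces: putting $c = (c_1,c_2) \in S \times T$, the displayed computation together with the factorization of principal right ideals gives
\[
(a_1,a_2)(S \times T) \cap (b_1,b_2)(S \times T) = c_1 S \times c_2 T = c(S \times T),
\]
which is exactly the single-alignment condition for $S \times T$. I do not expect any genuine obstacle here: the entire argument rests on the factorization of principal right ideals in a direct product, and the only point requiring a moment's care is the nonemptiness of both factors, since that is what permits the two single-alignment hypotheses to be applied independently.
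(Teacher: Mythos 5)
Your proof is correct, and since the paper only remarks that ``the proof of the following is easy'' without writing one out, your componentwise argument --- $(a_1,a_2)(S\times T)=a_1S\times a_2T$, intersecting Cartesian products factor by factor, and using nonemptiness of a product to invoke single alignment in each coordinate --- is exactly the intended routine verification. No gaps.
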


The following is now immediate from the above lemma, Corollary~\ref{cor:kfree-is-kmonoid} and properties of free monoids.

\begin{corollary}\label{cor:kfree-is-kmonoid} Let $A_{1}, \ldots, A_{k}$ be $k$ alphabets.
Then $A_{1}^{\ast} \times \ldots \times A_{k}^{\ast}$ is a singly aligned $k$-monoid.
\end{corollary}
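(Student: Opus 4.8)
The plan is to separate the two assertions hidden in the statement: that $A_{1}^{\ast} \times \ldots \times A_{k}^{\ast}$ is a $k$-monoid, and that it is singly aligned. The first is already in hand, being exactly the content of the preceding corollary (obtained from Lemma~\ref{lem:prod}), so the real work is to verify the singly aligned condition, and for this I would reduce everything to the one-factor case via Lemma~\ref{lem:needed-later}.

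First I would establish the base case: every free monoid $A^{\ast}$ is singly aligned. Suppose $aA^{\ast} \cap bA^{\ast} \neq \varnothing$ and pick $z$ in the intersection, so that $z = au = bv$ for some $u, v \in A^{\ast}$. Since $A^{\ast}$ is equidivisible by Lemma~\ref{lem:darwin}, there is an element $t$ with either $b = at$ (and $u = tv$) or $a = bt$ (and $v = tu$). In the first case $b \in aA^{\ast}$, whence $bA^{\ast} \subseteq aA^{\ast}$ and therefore $aA^{\ast} \cap bA^{\ast} = bA^{\ast}$; in the second case symmetrically $aA^{\ast} \cap bA^{\ast} = aA^{\ast}$. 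Either way the intersection is a principal right ideal $cA^{\ast}$, so $A^{\ast}$ is singly aligned. Informally, this is the familiar statement that two strings with a common extension must be prefix-comparable, and the longer one generates all their common right multiples.

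Next I would pass from one factor to $k$ factors. Lemma~\ref{lem:needed-later} handles a product of two singly aligned monoids, and a routine induction on the number of factors then shows that any finite direct product of singly aligned monoids is again singly aligned. Applying this to $A_{1}^{\ast}, \ldots, A_{k}^{\ast}$, each of which is singly aligned by the base case, gives that $A_{1}^{\ast} \times \ldots \times A_{k}^{\ast}$ is singly aligned. Combining this with the fact, recorded in the preceding corollary, that the same monoid is a $k$-monoid yields the statement.

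I expect the only non-formal step to be the base case, where equidivisibility is the essential ingredient; once free monoids are known to be singly aligned, the passage to products is purely bookkeeping through Lemma~\ref{lem:needed-later} and a short induction. It is worth noting that the two halves of the argument are genuinely independent: the singly aligned condition is purely multiplicative, concerning only principal right ideals, and so it does not interact with the $\mathbb{N}^{k}$-grading furnished by the isomorphism $\mu$ of Lemma~\ref{lem:prod}. Hence the grading and the single alignment can be verified separately and then simply conjoined.
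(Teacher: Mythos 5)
Your proposal is correct and follows exactly the route the paper intends: the paper declares the corollary immediate from Lemma~\ref{lem:needed-later}, the preceding corollary, and ``properties of free monoids'', and you have simply filled in those properties (single alignment of $A^{\ast}$ via equidivisibility) together with the routine induction on the number of factors. No issues.
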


The following portmanteau lemma (proved first in \cite{KP}) summarizes some of the important algebraic properties of $k$-monoids.
Recall that a monoid is said to be {\em conical} if its group of units is trivial.

\begin{lemma}\label{lem:alg-properties}
Let $S$ be a $k$-monoid with identity $1$ 
and with monoid homomorphism $\delta \colon S \rightarrow \mathbb{N}^{k}$.
\begin{enumerate}
\item  $S$ is cancellative.
\item  $\delta (1) = \mathbf{0}$ and is the only element that is mapped by $\delta$ to $\mathbf{0}$.
\item $S$ is conical.
\end{enumerate}
\end{lemma}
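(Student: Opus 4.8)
The plan is to extract all three statements from the uniqueness clause of the (UFP) together with two elementary features of the monoid $(\mathbb{N}^{k},+)$: it is cancellative, and it is \emph{positive} in the sense that $\mathbf{a} + \mathbf{b} = \mathbf{0}$ forces $\mathbf{a} = \mathbf{b} = \mathbf{0}$. I would treat the parts in the order (2), (3), (1), since the first feeds the second while the third is essentially self-contained.

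For (2), the equality $\delta(1) = \mathbf{0}$ is automatic because $\delta$ is a monoid homomorphism. For uniqueness I would repeat verbatim the argument of Lemma~\ref{lem:basic-UFP}(2): if $\delta(e) = \mathbf{0}$, then writing $\mathbf{0} = \mathbf{0} + \mathbf{0}$ and applying the (UFP) to $e$ yields a unique factorization $e = e_{1}e_{2}$ with $\delta(e_{1}) = \delta(e_{2}) = \mathbf{0}$; but $e = 1 \cdot e$ and $e = e \cdot 1$ are both such factorizations, so uniqueness forces $e = 1$. The only thing to verify is that $\mathbf{0} = \mathbf{0} + \mathbf{0}$ holds in $\mathbb{N}^{k}$, which it does componentwise.

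Part (3) then follows quickly. If $u$ is a unit with inverse $v$, then $\delta(u) + \delta(v) = \delta(uv) = \delta(1) = \mathbf{0}$; positivity of $\mathbb{N}^{k}$ gives $\delta(u) = \mathbf{0}$, and part (2) then yields $u = 1$. Hence the group of units is trivial and $S$ is conical.

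The substance is in (1), and here the lever is again the uniqueness half of the (UFP), not its existence half. For left cancellation, suppose $xy = xv$ and set $a = xy$. Applying $\delta$ and cancelling in $\mathbb{N}^{k}$ gives $\delta(y) = \delta(v)$. Since $\delta(a) = \delta(x) + \delta(y)$, the (UFP) produces a \emph{unique} factorization of $a$ into a left factor of size $\delta(x)$ and a right factor of size $\delta(y)$; both $(x,y)$ and $(x,v)$ are such factorizations, whence $y = v$. Right cancellation is the mirror image: from $xy = uy$ one first gets $\delta(x) = \delta(u)$, and then $(x,y)$ and $(u,y)$ are two factorizations of the common value $a$ with left factor of size $\delta(x)$ and right factor of size $\delta(y)$, so uniqueness gives $x = u$. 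I expect the only place demanding care is the bookkeeping of sizes, so that the two competing factorizations really do match the single size pattern $\delta(x) + \delta(y)$ required by one application of the (UFP); once the sizes are aligned, uniqueness closes both cases immediately.
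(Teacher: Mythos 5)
Your proposal is correct and follows essentially the same route as the paper: all three parts are extracted from the uniqueness clause of the (UFP) together with cancellativity and positivity of $(\mathbb{N}^{k},+)$, with part (2) proved by comparing the factorizations $e = 1\cdot e = e\cdot 1$ against the unique $\mathbf{0}+\mathbf{0}$ factorization, part (3) reduced to part (2), and part (1) obtained by noting that $(x,y)$ and $(x,v)$ (respectively $(x,y)$ and $(u,y)$) are two factorizations matching the same size pattern. The only differences are cosmetic: you reorder the parts and spell out the size bookkeeping in (1) that the paper leaves implicit.
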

\begin{proof}
(1)  Let $z = xy = xv$.
We have that $\delta (y) = \delta (v)$ and so,
by the (UFP), we have that $y = v$.
Now, suppose that $xy = uy$.
We apply the (UFP) again to deduce that $x = u$.

(2) We have that $11 = 1$ and so
$\delta (11) = \delta (1) + \delta (1) = \delta (1)$.
It follows that $\delta (1) = \mathbf{0}$.
Now, suppose that $\delta (a) = \mathbf{0}$ where $a \in S$.
Then $a = a_{1}a_{2}$ where $\delta (a_{1}) = \mathbf{0} = \delta (a_{2})$.
But $a = a1 = 1a$.
We have that $\delta (a) = \mathbf{0} + \mathbf{0}$.
We deduce that $a = 11 = 1$.

(3) Suppose that $xy = 1$.
Then $\delta (x) = \delta (y) = \mathbf{0}$.
By part (2) above both $x$ and $y$ is equal to the identity.
\end{proof}

The biggest difference between $k$-monoids, where $k \geq 2$, 
and free monoids lies in the fact that, with respect to the usual order, whereas the set $\mathbb{N}$ is linearly ordered
the set $\mathbb{N}^{k}$ is not.
Here is the appropriate analogue of Lemma~\ref{lem:darwin}.

\begin{lemma}\label{lem:levi} Let $S$ be a $k$-monoid.
Let $xy = uv$ and suppose that $\delta (x) \geq \delta (u)$.
Then there is $t \in S$ such that $x = ut$ and $v = ty$.
In particular, if $\delta (x) = \delta (u)$ then $x = y$.
\end{lemma}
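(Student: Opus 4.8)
The plan is to mimic the equidivisibility argument of Lemma~\ref{lem:basic-UFP}(1), but to exploit the hypothesis $\delta (x) \geq \delta (u)$ so as to make a single application of the (UFP) rather than a case split. The crucial first observation is that, since the order on $\mathbb{N}^{k}$ is componentwise, $\delta (x) \geq \delta (u)$ means precisely that the difference $\mathbf{n} := \delta (x) - \delta (u)$ lies in $\mathbb{N}^{k}$; thus $\delta (x) = \delta (u) + \mathbf{n}$ is a genuine decomposition of sizes inside $\mathbb{N}^{k}$. This is exactly the shape of input that the (UFP) demands, and it is the one place where the hypothesis is used.

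Next I would apply the (UFP) to the element $x$ with the decomposition $\delta (x) = \delta (u) + \mathbf{n}$, obtaining a (unique) factorization $x = x_{1}t$ with $\delta (x_{1}) = \delta (u)$ and $\delta (t) = \mathbf{n}$. Substituting into $xy = uv$ gives $x_{1}(ty) = uv$, so both $x_{1}(ty)$ and $uv$ are factorizations of the single element $z := uv$. I then compare them via the (UFP). Writing $\mathbf{p} := \delta (u)$, we have $\delta (x_{1}) = \mathbf{p} = \delta (u)$, while $\delta (ty) = \mathbf{n} + \delta (y)$ and, using $\delta (z) = \delta (x) + \delta (y) = \delta (u) + \mathbf{n} + \delta (y)$, also $\delta (v) = \delta (z) - \mathbf{p} = \mathbf{n} + \delta (y)$. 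Hence the two factorizations of $z$ share the identical size profile $(\mathbf{p}, \mathbf{n} + \delta (y))$, so the uniqueness clause of the (UFP) forces $x_{1} = u$ and $ty = v$. Combining with $x = x_{1}t$ yields $x = ut$ and $v = ty$, as required.

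For the final sentence I would specialize to $\delta (x) = \delta (u)$, so that $\mathbf{n} = \mathbf{0}$ and hence $\delta (t) = \mathbf{0}$. By Lemma~\ref{lem:alg-properties}(2) the identity is the only element of size $\mathbf{0}$, whence $t = 1$ and so $x = u1 = u$ (and likewise $v = 1y = y$). I note that the printed conclusion ``$x = y$'' appears to be a typo for ``$x = u$'', since it is $x = u$, equivalently $v = y$, that the argument delivers.

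The size bookkeeping in the comparison step is routine, and I anticipate no genuine obstacle beyond the initial observation: recognizing that the componentwise inequality $\delta (x) \geq \delta (u)$ is exactly the condition under which $\delta (x) - \delta (u) \in \mathbb{N}^{k}$, which is what permits a single clean invocation of the (UFP) in place of the linear-order trichotomy used for $\mathbb{N}$.
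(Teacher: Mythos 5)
Your proof is correct and follows essentially the same route as the paper: write $\delta(x)=\delta(u)+\mathbf{n}$, apply the (UFP) to factor $x=x_{1}t$, substitute to get two same-size factorizations of $uv$, and invoke uniqueness to conclude $x_{1}=u$ and $v=ty$. Your observation that the final ``$x=y$'' should read ``$x=u$'' is also right --- the paper's own proof concludes with exactly that.
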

\begin{proof} Put $z = xy = uv$.
There exists $\mathbf{r} \in \mathbb{N}^{k}$ such that $\delta (x) = \delta (u) + \mathbf{r}$.
By the (UFP), we have that $x = u't$ where $\delta (u) = \delta (u')$
and $\delta (t) = \mathbf{r}$.
We therefore have that $u'ty = uv$.
We now apply the (UFP) again, to deduce that $u' = u$ and $v = ty$. 
If $\delta (x) = \delta (u)$ then we get $x = u$ from $x = ut$.
\end{proof}

We return briefly to $1$-monoids.
Let $S$ be a free monoid on the alphabet $A$.
The associated monoid homomorphism $\delta \colon S \rightarrow \mathbb{N}$ is always surjective
except in the case where $A$ is empty.
This can be generalized.

\begin{lemma}\label{lem:teeth} Suppose that $\delta \colon S \rightarrow \mathbb{N}^{k}$ is a $k$-monoid.
If $\delta$ is not surjective then there exists a monoid homomorphism $\delta' \colon S \rightarrow \mathbb{N}^{k-1}$
such that $S$ is a $(k-1)$-monoid.
\end{lemma}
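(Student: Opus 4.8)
The plan is to show that non-surjectivity of $\delta$ forces one of the coordinate projections of $\delta$ to vanish identically on $S$; deleting that coordinate then yields the required homomorphism $\delta'$ into $\mathbb{N}^{k-1}$.

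First I would examine the image $\delta(S)$, which is a submonoid of $\mathbb{N}^{k}$ since $\delta$ is a monoid homomorphism. The key observation is that $\mathbb{N}^{k}$ is generated as a monoid by the basis vectors $\mathbf{e}_{1}, \ldots, \mathbf{e}_{k}$, so if every $\mathbf{e}_{i}$ lay in $\delta(S)$ then $\delta(S)$ would contain the whole submonoid they generate, namely all of $\mathbb{N}^{k}$, contradicting non-surjectivity. Hence there exists some index $j$ with $\mathbf{e}_{j} \notin \delta(S)$.

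Next I would show that the $j$-th component of $\delta$ is identically zero. Suppose for contradiction that some $x \in S$ has $\delta(x)_{j} \geq 1$. Then in $\mathbb{N}^{k}$ we may write $\delta(x) = \mathbf{e}_{j} + (\delta(x) - \mathbf{e}_{j})$, a genuine decomposition into elements of $\mathbb{N}^{k}$. The (UFP) then supplies a factor $x_{1}$ of $x$ with $\delta(x_{1}) = \mathbf{e}_{j}$, whence $\mathbf{e}_{j} \in \delta(S)$, a contradiction. Therefore $\delta(x)_{j} = 0$ for every $x \in S$.

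Finally I would define $\delta' \colon S \to \mathbb{N}^{k-1}$ to be $\delta$ followed by the projection $\pi$ that deletes the $j$-th coordinate, and verify the (UFP). Writing $\iota \colon \mathbb{N}^{k-1} \to \mathbb{N}^{k}$ for the order-preserving, additive, injective map that reinserts a $0$ in position $j$, the vanishing of the $j$-th component gives $\delta = \iota \circ \delta'$. Now if $\delta'(x) = \mathbf{a} + \mathbf{b}$, applying $\iota$ yields $\delta(x) = \iota(\mathbf{a}) + \iota(\mathbf{b})$; the (UFP) for $\delta$ produces a unique factorization $x = x_{1}x_{2}$ with $\delta(x_{1}) = \iota(\mathbf{a})$ and $\delta(x_{2}) = \iota(\mathbf{b})$, and applying $\pi$ gives $\delta'(x_{1}) = \mathbf{a}$ and $\delta'(x_{2}) = \mathbf{b}$. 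Injectivity of $\iota$ transports uniqueness back to $\delta'$, so $\delta'$ satisfies the (UFP) and $S$ is a $(k-1)$-monoid. The substantive step is the middle one --- deducing from a single missing basis vector that an entire coordinate of $\delta$ is trivial --- and this is precisely where the (UFP) does the work; the final verification is then routine bookkeeping about the embedding $\iota$ being a monoid map compatible with the orders.
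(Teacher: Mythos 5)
Your proposal is correct and follows essentially the same route as the paper: locate a missing basis vector $\mathbf{e}_{j}$, use the (UFP) to conclude that the $j$th component of $\delta$ vanishes identically on $S$, and then delete that coordinate. If anything, your middle step (deducing from $\mathbf{e}_{j}\notin\delta(S)$ that the whole $j$th coordinate is zero) is spelled out more carefully than in the paper's rather compressed proof.
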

\begin{proof} Suppose that $\delta$ is not surjective.
Then it is easy to see that there exists an element of $\mathbf{m} \in \mathbb{N}^{k}$ which is not in the image of $\delta$ 
and which has a component which is zero.
By permuting the components if necessary,
we can assume, without loss of generality, that it is the $k$th component of $\mathbf{m}$ which is $0$.
Observe, by the (UFP), that no element in the image of $\delta$ can have a non-zero entry in the $k$th position;
We may therefore define $\delta' \colon S \rightarrow \mathbb{N}^{k-1}$ to be $\delta$ followed by the map from $\mathbb{N}^{k} \rightarrow \mathbb{N}^{k-1}$
given by $(n_{1},\ldots, n_{k-1},n_{k}) \mapsto (n_{1},\ldots, n_{k-1})$.
Then $\delta'$ is a monoid homomorphism and satisfies the (UFP). 
\end{proof}

In the light of Lemma~\ref{lem:teeth},
we say that a $k$-monoid $S$ is {\em strict} if $\delta \colon S \rightarrow \mathbb{N}^{k}$
is surjective.
The following is now immediate by the above lemma and Corollary~\ref{cor:kfree-is-kmonoid}.

\begin{corollary}\label{cor:strict} Let $A_{1}, \ldots, A_{k}$ be $k$ alphabets each with at least one element.
Then $A_{1}^{\ast} \times \ldots \times A_{k}^{\ast}$ is a strict, singly aligned $k$-monoid.
\end{corollary}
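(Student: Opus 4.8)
The plan is to leverage everything already in hand. By Corollary~\ref{cor:kfree-is-kmonoid} (in its singly aligned form), the product $A_{1}^{\ast} \times \ldots \times A_{k}^{\ast}$ is a singly aligned $k$-monoid, where the governing homomorphism is $\delta \colon A_{1}^{\ast} \times \ldots \times A_{k}^{\ast} \rightarrow \mathbb{N}^{k}$ given by $\delta(x_{1},\ldots,x_{k}) = (|x_{1}|, \ldots, |x_{k}|)$. Thus the singly aligned and $k$-monoid parts of the conclusion come for free, and the only genuinely new assertion is strictness, namely that this particular $\delta$ is surjective. So I would reduce the whole statement to proving surjectivity of $\delta$.

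First I would record what surjectivity of $\delta$ means coordinatewise. Since $\delta$ is built as the product of the length maps $\delta_{i} \colon A_{i}^{\ast} \rightarrow \mathbb{N}$, and since $\mathbb{N}^{k}$ is, via the isomorphism $\mu$ used to assemble the product homomorphism, the literal product of the $\mathbb{N}$'s, $\delta$ is surjective precisely when each $\delta_{i}$ is surjective. Hence the problem splits into $k$ independent one-dimensional problems.

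Next I would dispose of each one-dimensional problem using the nonemptiness hypothesis. Fix $i$ and choose any letter $a_{i} \in A_{i}$, which is possible exactly because $A_{i}$ has at least one element. Then $\delta_{i}(a_{i}^{n}) = |a_{i}^{n}| = n$ for every $n \in \mathbb{N}$, so $\delta_{i}$ is onto $\mathbb{N}$. Assembling these, given any $\mathbf{m} = (m_{1}, \ldots, m_{k}) \in \mathbb{N}^{k}$, the element $(a_{1}^{m_{1}}, \ldots, a_{k}^{m_{k}})$ is mapped by $\delta$ to $\mathbf{m}$, so $\delta$ is surjective and the $k$-monoid is strict.

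I do not expect a real obstacle here; the content lies entirely in recognizing that the nonemptiness hypothesis on each $A_{i}$ is exactly what rules out the degenerate behaviour described in Lemma~\ref{lem:teeth}. Indeed, the contrapositive of that lemma is the conceptual point: a non-surjective $\delta$ forces some coordinate of the image to be identically zero and collapses a $k$-monoid to a $(k-1)$-monoid, and that coordinate collapse is precisely what happens when some $A_{i}$ is empty. The only things to be careful about, then, are to invoke the correct (singly aligned) corollary for the first two properties and to exhibit the surjectivity witness $(a_{1}^{m_{1}}, \ldots, a_{k}^{m_{k}})$ explicitly.
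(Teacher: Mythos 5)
Your proposal is correct and matches the paper's approach: the paper simply declares the corollary immediate from the singly-aligned version of Corollary~\ref{cor:kfree-is-kmonoid} together with the observation that non-empty alphabets make the length maps, and hence the assembled $\delta$, surjective. Your explicit witness $(a_{1}^{m_{1}}, \ldots, a_{k}^{m_{k}})$ just fills in the detail the paper leaves to the reader.
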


An {\em atom} in a monoid is a non-invertible element $a$ such that if $a = bc$ then at least one of $b$ or $c$ is invertible.

\begin{lemma}\label{lem:atoms} Let $S$ be a $k$-monoid.
\begin{enumerate}
\item The atoms in $S$ are the elements $a$ where $\delta (a) = \mathbf{e}_{i}$.
\item Every non-identity element is a product of atoms
\end{enumerate}
\end{lemma}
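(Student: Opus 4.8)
The plan is to reduce everything to the combinatorics of decomposing vectors in $\mathbb{N}^{k}$ under addition and then transport those decompositions back to $S$ via the (UFP). The enabling observation is that, by Lemma~\ref{lem:alg-properties}, the monoid $S$ is conical, so its only invertible element is the identity~$1$, and moreover $1$ is the unique element sent by $\delta$ to $\mathbf{0}$. Consequently, an element of $S$ is an atom precisely when it is not the identity and cannot be written as a product of two non-identity elements. The arithmetic fact doing the real work is that the vectors $\mathbf{e}_{i}$ are exactly the non-zero elements of $(\mathbb{N}^{k},+)$ that cannot be written as a sum of two non-zero vectors, i.e. whose components sum to~$1$.

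For part~(1) I would prove both inclusions. If $\delta(a) = \mathbf{e}_{i}$, then $a \neq 1$, so $a$ is non-invertible; and whenever $a = bc$ we have $\delta(b) + \delta(c) = \mathbf{e}_{i}$ in $\mathbb{N}^{k}$, which forces one of $\delta(b), \delta(c)$ to be $\mathbf{0}$, whence one of $b, c$ equals~$1$. So $a$ is an atom. Conversely, suppose $a$ is an atom. Then $a \neq 1$, so $\delta(a) \neq \mathbf{0}$. Writing $\mathbf{m} = \delta(a)$, if $\mathbf{m}$ were not equal to any $\mathbf{e}_{i}$, then $\sum_{j} \mathbf{m}_{j} \geq 2$, and I could split $\mathbf{m} = \mathbf{p} + \mathbf{q}$ with both summands non-zero. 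The (UFP) would then yield a factorization $a = a_{1}a_{2}$ with $\delta(a_{1}) = \mathbf{p} \neq \mathbf{0}$ and $\delta(a_{2}) = \mathbf{q} \neq \mathbf{0}$, so neither factor is the identity, contradicting that $a$ is an atom. Hence $\delta(a) = \mathbf{e}_{i}$ for some $i$.

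For part~(2) I would induct on the total size $\sum_{j} \mathbf{m}_{j}$, where $\mathbf{m} = \delta(a)$. The base case, where this sum equals~$1$, is exactly the case $\delta(a) = \mathbf{e}_{i}$, so $a$ is an atom by part~(1) and is a product of a single atom. For the inductive step, given a non-identity $a$, choose any $i$ with $\mathbf{m}_{i} > 0$ and write $\mathbf{m} = \mathbf{e}_{i} + (\mathbf{m} - \mathbf{e}_{i})$; the (UFP) produces $a = a_{1}a_{2}$ where $\delta(a_{1}) = \mathbf{e}_{i}$, so $a_{1}$ is an atom, and $\delta(a_{2}) = \mathbf{m} - \mathbf{e}_{i}$ has strictly smaller component-sum. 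If $a_{2} = 1$ then $a = a_{1}$ is an atom; otherwise the induction hypothesis writes $a_{2}$ as a product of atoms, and so does $a = a_{1}a_{2}$.

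Both parts are short because the (UFP) is the bridge that lifts any additive decomposition in $\mathbb{N}^{k}$ to a multiplicative one in~$S$; there is no genuinely hard step. The only point requiring the smallest amount of care is the elementary verification that the $\mathbf{e}_{i}$ are precisely the additively irreducible non-zero vectors of $\mathbb{N}^{k}$, which is what makes the atom characterization in part~(1) match the base case of the induction in part~(2).
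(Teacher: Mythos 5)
Your proposal is correct and follows essentially the same route as the paper: both directions of (1) come from matching additive decompositions of $\mathbf{e}_{i}$ in $\mathbb{N}^{k}$ with factorizations in $S$ via the (UFP), and (2) is the paper's repeated peeling-off of $\mathbf{e}_{i}$'s from $\delta(a)$, merely organized as an explicit induction on the component sum. Your version is slightly more detailed in the forward direction of (1), which the paper dispatches with ``it is clear,'' but the underlying argument is identical.
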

\begin{proof} (1) By the (UFP), it is clear that if $a$ is an atome then  $\delta (a) = \mathbf{e}_{i}$.
Suppose that $a$ is such that $\delta (a) =  \mathbf{e}_{i}$.
We prove that $a$ is an atom.
Suppose that $a = bc$.
Then $\delta (a) = \delta (b) + \delta (c)$.
But  $\delta (a) = \mathbf{e}_{i}$.
Thus either $\delta (b) = \mathbf{0}$ or $\delta (c) = \mathbf{0}$.
It follows that either $b$ is invertible or $c$ is invertible (in fact, the identity).

(2) Let $a$ be a non-identity element.
Then we may write $\delta (a) = m_{1}\mathbf{e}_{1} + \ldots + m_{k}\mathbf{e}_{k}$
where $m_{1},\ldots,m_{k}$ are natural numbers.
Using the fact that  $m_{1}\mathbf{e}_{1} = \mathbf{e}_{1} + \ldots + \mathbf{e}_{1}$ and so on and the (UFP),
we may now use (1) and deduce that every non-identity element is a product of atoms. 
\end{proof}

In free monoids $A^{\ast}$, the atoms are nothing other than the letters.
But, whereas free monoids have one alphabet, $k$-monoids have $k$,
which we now define.
Fix $k$ and let $1 \leq l \leq k$.
Define $\pi_{l} \colon \mathbb{N}^{k} \rightarrow \mathbb{N}$
by $(m_{1},\ldots, m_{l}, \ldots ,m_{k}) \mapsto m_{l}$.
This is a monoid homomorphism.
Let $\delta \colon S \rightarrow \mathbb{N}^{k}$ be a $k$-monoid.
Define $S_{l}$ to consist of all elements $a \in S$ such that $\delta (a)$ has $0$'s eveywhere except possibly at the $l$th component.
The set $S_{l}$ is non-empty since it must contain the identity.
Thus $S_{l}$ is clearly a submonoid of $S$.
Observe that $\delta_{l} = \pi_{l}\delta \colon S_{l} \rightarrow \mathbb{N}$
shows that $S_{l}$ is a $1$-monoid, and so free.
If $s \in S$ then $\delta (s) = m_{1}\mathbf{e}_{1} + \ldots + m_{k}\mathbf{e}_{k}$.
We may write $S = S_{1} \dots S_{l}$ uniquely.
It follows that $k$-monoids are constructed from $k$ free monoids (though not using direct products in general).
In the case $k = 2$, we can construct $2$-monoids from certain Zappa-Sz\'ep products \cite{Brin2005}
of free monoids.
For each $1 \leq l \leq k$, define $X_{l} = \delta^{-1}(\mathbf{e}_{l})$.
We call $(X_{1}, \ldots, X_{k})$ the {\em $k$ alphabets} associated with the $k$-monoid $S$.
Each set $X_{l}$ (which could be empty) is a set of free generators of the free monoid $S_{l}$
and consists of atoms by part (1) of Lemma~\ref{lem:atoms}.

The proof of the following lemma is immediate by the (UFP);
it shows that in a $k$-monoid there are certain relations between the atoms.

\begin{lemma}\label{lem:squares} Let $S$ be a $k$-monoid with the above notation. 
If $a$ and $b$ are atoms with $a \in X_{i}$ and $b \in X_{j}$, where $i \neq j$, 
then there exist unique atoms $a' \in X_{i}$ and $b' \in X_{j}$ such that $ab = b'a'$.
\end{lemma}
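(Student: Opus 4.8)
The plan is to obtain $a'$ and $b'$ from a single application of the (UFP) to the product $ab$, exploiting the fact that $i \neq j$ makes the two orderings of $\mathbf{e}_i + \mathbf{e}_j$ into a genuine reordering rather than a trivial one.

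First I would compute the size of $ab$. Since $\delta$ is a monoid homomorphism and $a \in X_i$, $b \in X_j$, we have $\delta(ab) = \delta(a) + \delta(b) = \mathbf{e}_i + \mathbf{e}_j$. Because $i \neq j$, this vector has a $1$ in positions $i$ and $j$ and $0$ elsewhere, so it may equally well be written as $\mathbf{e}_j + \mathbf{e}_i$; this is the reordering that will produce the commuted word $b'a'$.

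Next I would feed the decomposition $\delta(ab) = \mathbf{e}_j + \mathbf{e}_i$ into the (UFP). This yields unique elements $x_{1}, x_{2} \in S$ with $ab = x_{1}x_{2}$, $\delta(x_{1}) = \mathbf{e}_j$ and $\delta(x_{2}) = \mathbf{e}_i$. Setting $b' = x_{1}$ and $a' = x_{2}$ gives $ab = b'a'$ with $\delta(b') = \mathbf{e}_j$ and $\delta(a') = \mathbf{e}_i$. By part (1) of Lemma~\ref{lem:atoms} both $b'$ and $a'$ are atoms, and since $X_l = \delta^{-1}(\mathbf{e}_l)$ by definition, we have $b' \in X_j$ and $a' \in X_i$, as required.

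Finally, uniqueness is inherited verbatim from the (UFP): any atoms $a'' \in X_i$ and $b'' \in X_j$ with $ab = b''a''$ form a factorization of $ab$ into factors of sizes $\mathbf{e}_j$ and $\mathbf{e}_i$, hence must coincide with $b'$ and $a'$. I do not anticipate a genuine obstacle here; the only point requiring care is to record why the hypothesis $i \neq j$ is used, namely that it is precisely this condition that turns $\mathbf{e}_i + \mathbf{e}_j = \mathbf{e}_j + \mathbf{e}_i$ into a nontrivial swap, and so encodes a commutation-type relation between the two distinct alphabets rather than the degenerate factorization one would obtain when $i = j$.
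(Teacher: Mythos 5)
Your proposal is correct and is exactly the argument the paper has in mind when it says the lemma is ``immediate by the (UFP)'': apply the unique factorization property to $\delta(ab)=\mathbf{e}_j+\mathbf{e}_i$, identify the resulting factors as atoms in $X_j$ and $X_i$ via Lemma~\ref{lem:atoms}(1) and the definition $X_l=\delta^{-1}(\mathbf{e}_l)$, and read off uniqueness from the (UFP) itself. No gaps.
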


The relations guaranteed by the above lemma can be pictured geometrically as follows:

$
\centerline{
\xymatrix{
\ar[r]^{b}&   &   \\
\ar[u]^{a} & &  \\
}}$

\noindent
implies

$
\centerline{
\xymatrix{
\ar[r]^{b}&   &   \\
\ar[u]^{a} \ar@{-->}[r]_{b'} & \ar@{-->}[u]_{a'}&  \\
}}$

\noindent
since $ab = b'a'$,
and 

$
\centerline{
\xymatrix{
&   &   \\
\ar[r]_{b} & \ar[u]_{a}&  \\
}}$

\noindent
implies

$
\centerline{
\xymatrix{
\ar@{-->}[r]^{b'}&   &   \\
\ar@{-->}[u]^{a'} \ar[r]_{b} & \ar[u]_{a}&  \\
}}$

\noindent
since $ba = a'b'$.

It follows immediately from the above lemma, that $X_{i}X_{j} \subseteq X_{j}X_{i}$ for all $i \neq j$.
We now describe some special cases.

\begin{proposition}  Let $S$ be a $k$-monoid with $k$ alphabets $(X_{1}\ldots, X_{k})$.
\begin{enumerate}

\item  Suppose that  $a \in X_{i}$ and $b \in X_{j}$, where $i \neq j$, implies that $ab = ba$.
Then $S$ is isomorphic to a finite direct product of free monoids.

\item If $|X_{1}| = |X_{2}| = \ldots = |X_{k}| = 1$ then  $S \cong \mathbb{N}^{k}$.
\end{enumerate}
\end{proposition}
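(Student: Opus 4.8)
The plan is to prove (1) by showing that, under the commutativity hypothesis, the multiplication map
$$\phi \colon S_{1} \times \cdots \times S_{k} \rightarrow S, \qquad (s_{1}, \ldots, s_{k}) \mapsto s_{1} s_{2} \cdots s_{k},$$
is a monoid isomorphism, and then to deduce (2) as a special case. First I would recall from the discussion preceding the proposition that every $s \in S$ has $\delta(s) = m_{1}\mathbf{e}_{1} + \cdots + m_{k}\mathbf{e}_{k}$ for uniquely determined $m_{l} \in \mathbb{N}$, and that iterated application of the (UFP) yields a unique factorization $s = s_{1} \cdots s_{k}$ with $s_{l} \in S_{l}$. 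This is precisely the statement that $\phi$ is a bijection, so the only thing left to establish is that $\phi$ is a homomorphism; freeness of each $S_{l}$ on $X_{l}$ has already been recorded.

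The homomorphism property reduces to a single commutation fact. First I would promote the hypothesis on atoms to the whole submonoids: if $u \in S_{i}$ and $v \in S_{j}$ with $i \neq j$, then writing $u = a_{1} \cdots a_{p}$ and $v = b_{1} \cdots b_{q}$ as products of atoms from $X_{i}$ and $X_{j}$ respectively (possible since $S_{l}$ is free on $X_{l}$), the assumption $a_{m} b_{n} = b_{n} a_{m}$ for all $m, n$ gives $uv = vu$. With this in hand, starting from $\phi(s)\phi(t) = s_{1}\cdots s_{k}\, t_{1}\cdots t_{k}$, I would move each $t_{l}$ leftward past the factors $s_{j}$ with $j > l$ (which lie in submonoids distinct from $S_{l}$) until it sits immediately after $s_{l}$, never having to pass another $t$. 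This produces $s_{1}t_{1}\, s_{2}t_{2}\cdots s_{k}t_{k}$, which is exactly $\phi$ of the product $(s_{1}, \ldots, s_{k})(t_{1}, \ldots, t_{k})$. Thus $\phi$ is an isomorphism and $S \cong S_{1} \times \cdots \times S_{k}$ is a finite direct product of free monoids.

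For (2), I would observe that the square relation of Lemma~\ref{lem:squares} forces commutativity when the alphabets are singletons. Writing $X_{l} = \{x_{l}\}$, the lemma produces atoms $x_{i}' \in X_{i}$ and $x_{j}' \in X_{j}$ with $x_{i}x_{j} = x_{j}'x_{i}'$; but $|X_{i}| = |X_{j}| = 1$ forces $x_{i}' = x_{i}$ and $x_{j}' = x_{j}$, whence $x_{i}x_{j} = x_{j}x_{i}$. So the hypothesis of (1) holds, and since each $S_{l}$ is free on the single generator $x_{l}$ we have $S_{l} \cong \mathbb{N}$; therefore $S \cong S_{1} \times \cdots \times S_{k} \cong \mathbb{N}^{k}$.

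I expect the only genuine content to lie in the commutation step of part (1): verifying that commutativity of the generating atoms lifts to commutativity of arbitrary elements across distinct $S_{i}$ and $S_{j}$, and then organizing the rearrangement so that each $t_{l}$ is moved only past factors belonging to submonoids different from $S_{l}$. Everything else—bijectivity of $\phi$, freeness of the $S_{l}$, and the singleton reduction in (2)—is either quoted directly from the preceding results or immediate.
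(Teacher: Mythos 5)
Your proposal is correct and follows essentially the same route as the paper: both use the unique factorization $s = s_{1}\cdots s_{k}$ with $s_{l} \in S_{l}$ to get a bijection with $S_{1} \times \cdots \times S_{k}$, and then verify the homomorphism property by sliding each $t_{l}$ leftward past factors from distinct submonoids. You are in fact slightly more careful than the paper in two places it glosses over — lifting commutativity from atoms to arbitrary elements of $S_{i}$ and $S_{j}$, and using Lemma~\ref{lem:squares} to check that singleton alphabets force the hypothesis of (1) — but these are details, not a different argument.
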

\begin{proof} (1)
It is enough to prove that $S \cong S_{1} \times \ldots \times S_{k}$.
For each $s \in S$, let $s = u_{1} \ldots u_{k}$ be the unique 
representation of $s$ as a product of elements of $S_{i}$ where $1 \leq i \leq k$.
Define $\theta (s) = (u_{1},\ldots, u_{k})$.
Clearly, this is a bijection.
Suppose that $t = v_{1} \ldots v_{k}$.
Then $st =  u_{1} \ldots u_{k}v_{1} \ldots v_{k}$.
By our assumption, $v_{1}$ commutes with all the elements $u_{2},\ldots, u_{k}$.
Thus we may write $st = (u_{1}v_{1})u_{2} \ldots u_{k}v_{2} \ldots v_{k}$.
Repeating, we obtain that $st = (u_{1}v_{1})(u_{2}v_{2}) \ldots (u_{k}v_{k})$.
This calculation shows that $\theta$ is a homomorphism.

(2) This is immediate by (1) above and the fact that the free monoid on one generator is isomorphic to $\mathbb{N}$.
\end{proof}

We want to regard elements of $k$ monoids as being $k$-dimensional boxes.
The following example motivates this idea.

\begin{example}\label{ex:scam}
{\em 
We shall now motivate our geometric representation of elements of $k$-monoids.
Let's take $k = 2$ and work with the $2$-monoid $S = \{a,b\}^{\ast} \times \{\alpha, \beta\}^{\ast}$,
a product of two free monoids on two generators a piece.
An element of this monoid is $x = (ab,\alpha\beta\beta)$.
This looks like two 1-dimensional strings.
We will show you that it is better to think of this as a $2 \times 3$ rectangle instead.
To save on notation, we shall write $a$ instead of $(a,1)$ and $\alpha$ instead of $(1,\alpha)$, and so on.
Our string is therefore $x = ab\alpha \beta \beta $.
But this is not the only way to write this string in terms of the generators.
To see why, we first set up a $2 \times 3$ grid:

$
\centerline{
\xymatrix{
 \ar[r]&  \ar[r] &  &  \\
\ar[u] \ar[r]&\ar[u] \ar[r]  &\ar[u] &  \\
\ar[u] \ar[r]  &\ar[u] \ar[r]  & \ar[u] & \\
\ar[u] \ar[r] & \ar[u] \ar[r] & \ar[u]   \\
}}$
We shall adopt the folloing convention:
horizontal lines will denote the elements $a$ and $b$
and vertical lines will denote the elements $\alpha$ and $\beta$.

$
\centerline{
\xymatrix{
 \ar[r]&  \ar[r] &  &  \\
\ar[u] \ar[r]&\ar[u] \ar[r]  &\ar[u]_{\beta} &  \\
\ar[u] \ar[r]  &\ar[u] \ar[r]  & \ar[u]_{\beta} & \\
\ar[u] \ar[r]_{a} & \ar[u] \ar[r]_{b} & \ar[u]_{\alpha}   \\
}}$

\noindent
We now traverse any path from the bottom left to the top right (double arrows):

$
\centerline{
\xymatrix{
 \ar[r]&  \ar[r] & \bullet  &   \\
\ar[u] \ar[r]& \ar[u] \ar@{=>}[r]  &\ar@{=>}[u]_{\beta} &  \\
\ar[u] \ar@{=>}[r]  &\ar@{=>}[u] \ar[r]  & \ar[u]_{\beta} & \\
\bullet \ar@{=>}[u] \ar[r]_{a} & \ar[u] \ar[r]_{b} & \ar[u]_{\alpha}   \\
}}$

\noindent
This corresponds to the product of generators $\alpha a \beta b \beta$
which is equal to the string $x$.
We therefore obtain all possible ways of representing the string $x$
as paths from the bottom left to the top right: and so 
from $ab\alpha \beta \beta$ to $\alpha \beta \beta a b$.
Each of these is a correct way of representing the string $x$
as a product of generators.
It follows that we should think of $x$ as the whole $2 \times 3$ rectangle.
}
\end{example}

We shall now generalize the previous example.

We start with free monoids.
Let $A$ be a (non-empty) alphabet.
We shall now regard elements of $A$ as directed (to the right) line segments of unit length
labelled by an element of $A$.
A (non-empty) string $a$ over $A$ can now be regarded as the concatenation of
$|a|$ such directed line segments each of length $1$.
Thus the total length of the line segment that results is $|a|$.
We therefore regard the elements of $A^{\ast}$ as being labelled lines.

Suppose now that $k = 2$.
Let $S$ be a $2$-monoid.
There are two alphabets $X_{1}$ and $X_{2}$
and each element of $a \in S$ can be written uniquely
as $a = uv$ where $u \in X_{1}^{\ast}$ and $v \in X_{2}^{\ast}$.
It is therefore tempting to view $a$ as consisting of two strings over different alphabets.
But we shall argue (just as in our previous example) that it makes more sense to regard $a$ as a {\em rectangle} with area $m \times n$
where $\delta (a) = (m,n)$.
Observe, first, that any representation of $a$ as a product of atoms must contain
exactly $m$ atoms from $X_{1}$ and exactly $n$ atoms from $X_{2}$.
None of these ways of representing $a$ is privileged.
Represent elements of $X_{1}$ by horizontal line segments directed to the right
and represent elements of $X_{2}$ by vertical line segments directed upwards.
Then each way of representing an element $a$ can be regarded as a sequence of directed line segments
some horizontal and some vertical.
These can be labelled by elements of $X_{1} \cup X_{2}$.
We therefore obtain an $m \times n$ grid in which the horizontal line segments are labelled
by elements of $X_{1}$ and the vertical line segments are labelled by elements of $X_{2}$.

We can use this geometrical way of regarding the elements of $k$-monoids
to demonstrate how the multiplication works; 
of course, this can be done purely algebraically, but we want to show how the product is defined
{\em geometrically}.
We are given $x$ and $y$ inside a $2$-monoid, and our goal is to calculate $xy$ geometrically.
We place the top right-hand corner of $x$ against the bottom left-hand corner of $y$:
\begin{center}
\begin{tikzpicture}
\draw(0,0) rectangle (1.5,1);
\draw(1.5,1) rectangle (2.5,2);
\node at (0.75, 0.5) {$x$};
\node at (2, 1.5) {$y$};
\end{tikzpicture}
\end{center}
We do not yet have a rectangle, so we use the diagrams following Lemma~\ref{lem:squares} to fill in the gaps:
\begin{center}
\begin{tikzpicture}
\draw(0,0) rectangle (1.5,1);
\draw(1.5,1) rectangle (2.5,2);
\draw[dashed] (0,1) -- (0,2);
\draw[dashed] (0,2) -- (1.5,2);
\draw[dashed] (1.5,0) -- (2.5,0);
\draw[dashed] (2.5,0) -- (2.5,1);
\node at (0.75, 0.5) {$x$};
\node at (2, 1.5) {$y$};
\end{tikzpicture}
\end{center}
and now we get the result $xy$ represented as a rectangle
\begin{center}
\begin{tikzpicture}
\draw(0,0) rectangle (2.5,2);
\node at (1.25, 1) {$xy$};
\end{tikzpicture}
\end{center}
What we have said for $k = 2$ applies to any value of $k$.
We can therefore regard the elements of a $k$-monoid as being $k$-dimenional boxes
which we shall call {\em $k$-strings}.
It is in this way, that we can regard $k$-monoids as being {\em higher dimensional free monoids}.

\begin{example}{\em  In this example, we shall construct a specific monoid $S$ that
has all the hallmarks of a $3$-monoid but is, in fact, not.
Let $S$ be the monoid generated by the set $X_{1} \cup X_{2} \cup X_{3}$
where
$$X_{1} = \{a,a_{1},a_{2},a_{3}\}, \quad X_{2} = \{b,b_{1},b_{2},b_{3},b_{4}\},  \text{ and } X_{3} = \{c,c_{1},c_{2},c_{3}\}.$$
In addition to the associative law, the following relations are satisfied
$ab = b_{1}a_{1}$, $bc = c_{2}b_{2}$, $a_{1}c = c_{1}a_{2}$, $ac_{2} = c_{3}a_{3}$, $a_{3}b_{2} = b_{3}a_{2}$
and $b_{1}c_{1} = c_{3}b_{4}$;
in addition, all relations of the form 
$xy = yx$ where $x \in X_{i}$ and $y \in X_{j}$ and $i \neq j$ are satisfied
except where $xy = ab$ or $xy = bc$ or $xy = a_{1}c$ or $xy = ac_{2}$ or $xy = a_{3}b_{2}$ 
or $xy = b_{1}c_{1}$.
The key point is that we have specified $xy = y'x'$ in all cases where $x \in X_{i}$ and $y \in X_{j}$ where $i \neq j$.
We shall prove that $S$ is not a $3$-monoid.
We calculate $abc$ in two ways (remember, we are assuming associativity).
First, $abc = (ab)c = b_{1}a_{1}c = b_{1}c_{1}a_{2}$.
Second, $abc = a(bc) = ac_{2}b_{2} = c_{3}a_{3}b_{2}$.
By associativity, we must have that $b_{1}c_{1}a_{2}  = c_{3}a_{3}b_{2}$.
Now $(b_{1}c_{1})a_{2} = c_{3}b_{4}a_{2} = c_{3}a_{2}b_{4}$.
In $S$, we therefore have that $c_{3}a_{3}b_{2} = c_{3}a_{2}b_{4}$.
If $S$ were a $3$-monoid, it would be cancellative.
This would imply that $a_{3}b_{2} = a_{2}b_{4}$.
But this contradicts the fact that in a $3$-monoid
such a factorization would have to be unique.}
\end{example}

On the basis of the above example, there is one other property that will be crucial when we come to describe presentations of $k$-monoids.

\begin{lemma}\label{lem:associativity} Let $S$ be a $k$-monoid with $k$ alphabets $(X_{1},\ldots, X_{k})$.
Let $X_{i}$, $X_{j}$ and $X_{k}$ be distinct alphabets.
Let $f \in X_{i}$, $g \in X_{j}$ and $h \in X_{k}$.
Suppose that
$fg = g^{1}f^{1}$, $f^{1}h = h^{1}f^{2}$, $g^{1}h^{1} = h^{2}g^{2}$, $gh = h_{1}g_{1}$, $fh_{1} = h_{2}f_{1}$ and $f_{1}g_{1} = g_{2}f_{2}$.
Then we must have that $f^{2} = f_{2}$, $g^{2} = g_{2}$, and $h^{2} = h_{2}$.
\end{lemma}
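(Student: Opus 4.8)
The plan is to compute the triple product $fgh$ in two different ways, using the six given relations to slide the three atoms past one another, and then to read off the individual factors by invoking the (UFP). The point is that each of the two computations will end in the same normal form, namely a product of an $X_{k}$-atom, then an $X_{j}$-atom, then an $X_{i}$-atom, and associativity in $S$ forces these two normal forms to coincide.

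First I would traverse the product ``one way''. Starting from $fgh$ and applying $fg = g^{1}f^{1}$, then $f^{1}h = h^{1}f^{2}$, and finally $g^{1}h^{1} = h^{2}g^{2}$, a short associativity computation gives
\[
fgh = g^{1}f^{1}h = g^{1}h^{1}f^{2} = h^{2}g^{2}f^{2}.
\]
Then I would traverse it ``the other way''. Starting again from $fgh$ and applying $gh = h_{1}g_{1}$, then $fh_{1} = h_{2}f_{1}$, and finally $f_{1}g_{1} = g_{2}f_{2}$, I obtain
\[
fgh = fh_{1}g_{1} = h_{2}f_{1}g_{1} = h_{2}g_{2}f_{2}.
\]
Since $S$ is associative, these two expressions for $fgh$ are equal, so
\[
h^{2}g^{2}f^{2} = h_{2}g_{2}f_{2}.
\]

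With this single identity in hand, the conclusion follows by applying the (UFP) twice. Both sides have size $\mathbf{e}_{k} + \mathbf{e}_{j} + \mathbf{e}_{i}$, and each is exhibited as an $X_{k}$-atom (of size $\mathbf{e}_{k}$) followed by an element of size $\mathbf{e}_{j} + \mathbf{e}_{i}$. Factoring $\mathbf{e}_{k} + (\mathbf{e}_{j} + \mathbf{e}_{i})$ via the (UFP) gives $h^{2} = h_{2}$ together with $g^{2}f^{2} = g_{2}f_{2}$. Factoring the latter product, which has size $\mathbf{e}_{j} + \mathbf{e}_{i}$, once more by the (UFP) yields $g^{2} = g_{2}$ and $f^{2} = f_{2}$, which is exactly what is claimed.

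I do not expect a genuine obstacle here: the whole content is the two rewriting computations combined with uniqueness of factorization. The only place requiring care is the bookkeeping, that is, making sure each of the six relations is applied in the correct position and orientation so that both routes really terminate in the common form $h\,g\,f$ (an $X_{k}$-atom, then an $X_{j}$-atom, then an $X_{i}$-atom). Here the geometric picture of a unit cube is a reliable guide: the two computations are precisely the two ways of sliding the three edges $f$, $g$, $h$ across the cube from one corner to the opposite corner, and the lemma is the assertion that these two slidings agree on the far three edges.
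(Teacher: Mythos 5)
Your proof is correct, and it is exactly the argument the paper intends: the lemma is stated without proof, but the example immediately preceding it computes a triple product $abc$ in two ways in precisely this fashion, and your two rewritings of $fgh$ followed by two applications of the (UFP) (first splitting off the $\mathbf{e}_{k}$-factor, then the $\mathbf{e}_{j}$- and $\mathbf{e}_{i}$-factors) fill in the omitted details correctly. No gaps.
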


We shall now work quite generally
although motivated by the Lemmas~\ref{lem:squares} and \ref{lem:associativity}.
Let $X$ be any non-empty set partitioned into $k$-blocks $X_{1}, \ldots, X_{k}$, each of which is assumed non-empty.
We shall refer to the elements of block $X_{i}$ as having the `colour $i$'.
For each ordered pair $(a,b) \in X_{i} \times X_{j}$, where $i \neq j$ (thus $a$ and $b$ have different colours),
we suppose that we are given a unique ordered pair $(b',a') \in X_{j} \times X_{i}$.
Put $R$ equal to the binary relation on $X^{\ast}$ which contains precisely the ordered pairs $(ab,b'a')$ and $(b'a',ab)$.
We call such a binary relation $R$ a {\em complete set of squares over $X$}.
We need an extra condition on $R$:
\begin{center}
$(ab,b^{1}a^{1}), (a^{1}c,c^{1}a^{2}), (b^{1}c^{1},c^{2}b^{2}) \in R$\\
and \\
$(bc,c_{1}b_{1}), (ac_{1},c_{2}a_{1}), (a_{1}b_{1},b_{2}a_{2}) \in R$\\
$\Rightarrow$ \\
$a^{2} = a_{2}$, $b^{2} = b_{2}$ and $c^{2} = c_{2}$.
\end{center}
We call this the {\em associativity condition}.

The following result was proved in the more general setting of category theory in \cite{HRSW}. 
It gives a monoid presentation of strict $k$-monoids.

\begin{theorem}\label{thm:construction} Let $X$ be any non-empty set partitioned into $k$-blocks $X_{1}, \ldots, X_{k}$. 
Let $R$ be any complete set of squares over $X$ that satisfies the associativity condition.
Let $\rho$ be the congruence generated by $R$.
Then $S = X^{\ast}/\rho$ is a strict $k$-monoid, and every strict $k$-monoid is isomorphic to one of this type.
Specifically, the quotient map from $X^{\ast}$ to $S$
is injective on $X$, and so we may identify $X$ with its image in $S$. 
There is a unique monoid homomorphism $\delta \colon S \to \mathbb{N}^k$ such that $\delta(a) = \mathbf{e}_{i}$ whenever $a \in X^i$.
\end{theorem}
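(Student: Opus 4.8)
The plan is to split the argument into two halves: first construct the $k$-monoid from the combinatorial data $(X, R)$, and then show that every strict $k$-monoid arises this way. For the construction direction, the essential task is to build the size homomorphism $\delta$ and verify the (UFP). I would first observe that the relations in $R$ are length-preserving in each colour: whenever $(ab, b'a') \in R$ we have $a, a' \in X_i$ and $b, b' \in X_j$, so the number of letters of each colour is preserved by a single application of a relator, and hence by the generated congruence $\rho$. This means the map $X^{\ast} \to \mathbb{N}^k$ sending each word to its colour-count vector is constant on $\rho$-classes, yielding a well-defined monoid homomorphism $\delta \colon S \to \mathbb{N}^k$ with $\delta(a) = \mathbf{e}_i$ for $a \in X_i$; surjectivity (strictness) is immediate since each $X_i$ is non-empty.

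The heart of the construction direction is the (UFP). I expect the main obstacle to lie here, and I would reduce it to a normal-form / rewriting result. The idea is that using the squares in $R$ one can always commute letters of colour $j$ past letters of colour $i$ for $i < j$, so that every element of $S$ has a representative in which the colours appear in blocks in the order $X_1 X_2 \cdots X_k$. Concretely, I would set up a rewriting system on $X^{\ast}$ whose rules are exactly the pairs $(ab, b'a')$ oriented to move lower-coloured letters leftward, and argue that it is terminating (each rewrite strictly decreases the number of out-of-order colour inversions) and confluent. Termination is routine; local confluence is precisely where the associativity condition is needed, since the only nontrivial overlaps involve three letters of three distinct colours, and the associativity condition is exactly the statement that the two ways of resolving such a triple overlap agree. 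By Newman's Lemma the system is then confluent, so normal forms are unique, and two words are $\rho$-equivalent if and only if they have the same normal form. Given unique normal forms, the (UFP) follows: to factor $a$ with $\delta(a) = \mathbf{m} + \mathbf{n}$, one uses the commutation relations to reorder so that the desired factorization is visible, and uniqueness of the factors follows from uniqueness of normal forms together with the colour-counting given by $\delta$. That the quotient map is injective on $X$ is a special case, since single letters are already in normal form and no relator identifies two distinct single letters.

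For the converse, let $S$ be any strict $k$-monoid with size map $\delta \colon S \to \mathbb{N}^k$, and take $X_i = \delta^{-1}(\mathbf{e}_i)$ as in Lemma~\ref{lem:atoms}, so $X = X_1 \cup \cdots \cup X_k$ is the set of atoms. By Lemma~\ref{lem:atoms}(2) the atoms generate $S$, giving a surjective monoid homomorphism $X^{\ast} \to S$. I would define $R$ by reading off, for each pair $(a,b) \in X_i \times X_j$ with $i \neq j$, the unique pair $(b',a')$ with $ab = b'a'$ guaranteed by Lemma~\ref{lem:squares}; this makes $R$ a complete set of squares, and Lemma~\ref{lem:associativity} is exactly the verification that $R$ satisfies the associativity condition. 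It remains to check that the induced map $X^{\ast}/\rho \to S$ is an isomorphism: it is surjective since the atoms generate, and injectivity amounts to showing that all relations holding in $S$ among the generators are consequences of $R$. For this I would again use unique normal forms: every element of $S$ has a $\delta$-prescribed colour content, the block factorization $S = S_1 \cdots S_k$ of an element is unique by repeated application of Lemma~\ref{lem:levi}, and each free factor $S_l$ has its own unique expression in the letters of $X_l$; so the normal form computed in $X^{\ast}/\rho$ maps bijectively onto the elements of $S$, forcing injectivity. Finally, uniqueness of $\delta$ on $S = X^{\ast}/\rho$ is clear, since $\delta$ is forced on generators and the generators generate.
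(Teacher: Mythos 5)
The paper itself does not prove this theorem --- it defers to \cite{HRSW} --- so I am judging your proposal on its own merits. Your overall architecture is sound, and the converse direction is essentially complete: taking $X_{i} = \delta^{-1}(\mathbf{e}_{i})$, reading off $R$ from Lemma~\ref{lem:squares}, verifying associativity via Lemma~\ref{lem:associativity}, and proving injectivity of $X^{\ast}/\rho \rightarrow S$ by comparing colour-sorted words with the unique $S_{1}\cdots S_{k}$ block factorization all work as you describe. In the forward direction, the reduction to a terminating, locally confluent rewriting system is a standard and viable route, and you are right that the only critical pairs involve three letters of three distinct colours and that the associativity condition is exactly their resolvability.

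There are, however, two genuine gaps in the forward direction. First, and most seriously, unique normal forms do \emph{not} by themselves give the uniqueness half of the (UFP). If $x = x_{1}x_{2} = x_{1}'x_{2}'$ with $\delta(x_{1}) = \delta(x_{1}') = \mathbf{m}$, the normal form of $x$ is obtained by \emph{interleaving and rewriting} the letters contributed by $x_{1}$ and $x_{2}$, so the factors cannot simply be read off from it together with the colour counts; the sentence ``uniqueness of the factors follows from uniqueness of normal forms together with the colour-counting'' is asserting precisely the hard content of the theorem. One way to close this: first prove the case $\mathbf{m} = \mathbf{e}_{i}$ by running your rewriting argument with respect to a linear order on the colours in which $i$ is least (the associativity condition yields confluence for \emph{every} such order), so that a word beginning with a colour-$i$ letter has a normal form beginning with that \emph{same} letter, forcing $a w \sim a'w'$ with $a,a'$ of colour $i$ to give $a = a'$ and $w \sim w'$; then induct on $|\mathbf{m}|$ by peeling off one atom at a time. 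Second, your oriented system only contains the relators $(ab,b'a')$ in which the higher colour stands on the left; the oppositely oriented relators of $R$ are consequences of your rules only if the square assigned to the ordered pair $(b',a')$ is again $(a,b)$. This involutivity is not literally part of the paper's definition of a complete set of squares, and it cannot be omitted: for $k = 2$ the associativity condition is vacuous, and one can rig the squares so that $a_{1}b = ba_{1}$ and $ba_{1} = a_{2}b$ in the quotient, whence $a_{1}b = a_{2}b$ and the (UFP) fails. You should state involutivity explicitly as (an implicit part of) the hypothesis; note that in the converse direction it holds automatically by the (UFP) in $S$.
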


\begin{example}\label{ex:av1}
{\em How do we construct concrete examples of $k$-monoids?
Theorem~\ref{thm:construction} shows that $k$-monoids always exist,
whereas by Corollary~\ref{cor:strict}, we can use finite direct products of $k$ free monoids (on at least one generator) 
to construct strict, singly aligned $k$-monoids.
Free monoids themselves arise from finite strings over an alphabet.
But this does not answer the question of how to construct {\em concrete} examples
of $k$-monoids in general.
In Section~3 of \cite{LV}, it was shown that any group acting co-compactly on a product of $k$ trees
can be used to construct a higher-rank graph with the number of vertices equal to the number of orbits.
This means that if the group acts simply transitively\footnote{The group $G$ acts simply transitively on the set $X$ if for any two elements $x$ and $y$ in $X$ there is a {\em unique} $g$
such that $gx = y$.} then we can construct a $k$-monoid.
Simply transitive groups actions on products of trees were constructed in \cite{stix-av} and \cite{RSV},
and Example~3.8 of \cite{LV} gives an explicit instance of a $k$-monoid constructed in this way. }
\end{example}

\section{How to construct Thompson-Higman type groups}

There are many things one might do with $k$-monoids,
the main goal of this section is to explain how to construct groups from certain kinds of $k$-monoids.
This work goes back to a paper by Birget \cite{Birget}, 
was developed using free monoids in \cite{Lawson2007, Lawson2007b}
and then generalized to classes of $k$-monoids in \cite{LV2020, LSV}.
The paper \cite{Lawson2018} provides a retrospective on the older work.
In the papers \cite{Lawson2007, Lawson2007b}, it was shown that the Thompson group
$G_{n,1}$\footnote{Sometimes the notation $V_{n,1}$ is used for these groups.} arises from the free monoid on $n$-generators.
In unpublished work, John Fountain described how the Thompson-Higman groups might be generalized
but a suitable generalization of free monoids was lacking.
A class of $k$-monoids provides just what we are looking for.

The intersection of principal right ideals in a free monoid, if non-empty, is always a principal right ideal.
More generally,
we say that a $k$-monoid $S$ is {\em finitely aligned} if the intersection $aS \cap bS$ is either
empty or finitely generated as a right ideal. 
The notion of a monoid being finitely aligned was first defined by Gould \cite{Gould}, and then further studied in \cite{CG}.
Independently, it became very important in the theory of higher-rank graphs and their $C^{\ast}$-algebras \cite{RS2005, JS2014, JS2018}.
It will be useful to have some definitions centred around principal right ideals.
If $xS \cap yS \neq \varnothing$, we say that $x$ and $y$ are {\em comparable}
whereas if  $xS \cap yS = \varnothing$, we say that $x$ and $y$ are {\em incomparable}.
Incomparable subsets of $k$-monoids are analogues of prefix codes in free monoids,
so we call incomparable sets {\em generalized prefix codes}.
We say that $a \in S$ is {\em dependent on} a set $X$,
if $au = xv$ for some $x \in X$ and $u,v \in S$.
A generalized prefix code $X$ is said to be {\em maximal} if
every element of $S$ is dependent on some element of $X$.
In free monoids, maximal prefix codes are precisely those prefix codes maximal in the above sense.

We investigate first the intersection of principal right ideals in complete generality.
The following is a folklore result known to many working on higher-rank graphs.
It is therefore important but not original. 

\begin{lemma}\label{lem:nelson} Let $S$ be a $k$-monoid.
Assume that $a$ and $b$ are comparable and let $c \in aS \cap bS$.
Then there exists an element $d \in S$ such that 
$c = dt$, for some $t \in S$, 
where
$d \in aS \cap bS$ and 
$\delta (d) = \delta (a) \vee \delta (b)$.
(Recall that $\delta (a), \delta (b) \in \mathbb{N}^{k}$, and so the join refers to the join of two elements in $\mathbb{N}^{k}$.
This operation was defined at the beginning of Section~3).
Thus 
$$aS \cap bS = \bigcup_{\substack{d \in aS \cap bS,\\ \delta (d) = \delta (a) \vee \delta (b)}} dS.$$
\end{lemma}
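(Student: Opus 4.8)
The plan is to establish the existence of the element $d$ first, and then derive the union decomposition as a consequence. I would begin with the element $c \in aS \cap bS$, so that $c = a u_{1} = b u_{2}$ for some $u_{1}, u_{2} \in S$. The key quantity is $\delta(c) \in \mathbb{N}^{k}$. Since $c = a u_{1}$, we have $\delta(c) = \delta(a) + \delta(u_{1}) \geq \delta(a)$, and similarly $\delta(c) \geq \delta(b)$. Because $\mathbb{N}^{k}$ is a lattice (as stressed in the NB after the partial order is defined), $\delta(c)$ is an upper bound of the pair $\{\delta(a), \delta(b)\}$, hence $\delta(c) \geq \delta(a) \vee \delta(b)$. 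Write $\delta(a) \vee \delta(b) = \delta(a) + \mathbf{r}$ for some $\mathbf{r} \in \mathbb{N}^{k}$ (this is legitimate since $\delta(a) \vee \delta(b) \geq \delta(a)$).

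Next I would invoke the (UFP) to split $c$. Since $\delta(c) = (\delta(a) \vee \delta(b)) + (\delta(c) - (\delta(a) \vee \delta(b)))$, the unique factorization property yields a unique factorization $c = d t$ with $\delta(d) = \delta(a) \vee \delta(b)$ and $\delta(t)$ equal to the remaining vector. It then remains to check that $d \in aS \cap bS$. To see $d \in aS$, I would apply Lemma~\ref{lem:levi}: from $c = a u_{1} = d t$ together with $\delta(d) = \delta(a) \vee \delta(b) \geq \delta(a)$, the lemma supplies an element $s \in S$ with $d = a s$, which gives $d \in aS$. By the symmetric argument using $\delta(d) \geq \delta(b)$ and $c = b u_{2} = d t$, I obtain $d \in bS$. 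Hence $d \in aS \cap bS$, with $c = d t$ and $\delta(d) = \delta(a) \vee \delta(b)$ as required.

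For the union formula, the inclusion of the right-hand side in $aS \cap bS$ is clear, since each $d$ in the index set already lies in $aS \cap bS$, so $dS \subseteq aS \cap bS$ as right ideals. The reverse inclusion is exactly the first part: every $c \in aS \cap bS$ can be written as $c = d t$ with $d$ ranging over the index set, so $c \in dS$, placing $c$ in the union. This yields the displayed equality.

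The main obstacle I anticipate is the careful application of Lemma~\ref{lem:levi} to conclude $d \in aS$ and $d \in bS$. The lemma is phrased for an equation $xy = uv$ with a comparison of sizes, and one must match the hypotheses precisely: here the two factorizations $c = a u_{1}$ and $c = d t$ play the roles of the two sides, and the inequality $\delta(d) \geq \delta(a)$ is what permits the conclusion $d = a s$. One subtlety is that Lemma~\ref{lem:levi} compares $\delta$-values in a fixed direction, so I would be attentive to which of $\delta(d), \delta(a)$ is the larger and orient the equation accordingly; since $\delta(d) = \delta(a) \vee \delta(b) \geq \delta(a)$ the orientation is unambiguous. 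The lattice property of $\mathbb{N}^{k}$, guaranteeing that $\delta(a) \vee \delta(b)$ exists and is dominated by $\delta(c)$, is the conceptual heart that makes the join the correct ``size'' for the required initial segment $d$.
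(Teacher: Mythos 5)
Your proof is correct and follows essentially the same route as the paper: bound $\delta(c)$ below by $\delta(a)\vee\delta(b)$, split off $d$ of exactly that size via the (UFP), and then verify $d\in aS\cap bS$. The only cosmetic difference is that you invoke Lemma~\ref{lem:levi} to get $d=as$, whereas the paper re-derives that step inline from the (UFP); your citation is a legitimate shortcut (and the intermediate observation $\delta(a)\vee\delta(b)=\delta(a)+\mathbf{r}$ you record is then not actually needed).
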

\begin{proof} By assumption, $c = ax = by$ for some $x,y \in S$.
It follows that $\delta (c) \geq \delta (a), \delta (b)$.
Thus $\delta (c) \geq \delta (a) \vee \delta (b)$.
We may therefore write $\delta (c) = (\delta (a) \vee \delta (b)) + \mathbf{m}$ for some $\mathbf{m} \in \mathbb{N}^{k}$.
Thus by the (UFP), we may write $c = dt$ where $\delta (d) = d(a) \vee d(b)$ and $\delta (t) = \mathbf{m}$.
Now $c = dt = ax$.
But $\delta (d) = \delta (a) \vee \delta (b) = \delta (a) + \mathbf{n}$ for some $\mathbf{n} \in \mathbb{N}^{k}$.
By the (UFP), we may therefore write $d = a'u$ where $\delta (a') = \delta (a)$ and $\delta (u) = \mathbf{n}$.
Thus $a'ut = ax$.
It now follows by the (UFP), that $a = a'$.
We have therefore proved that $d \in aS$.
We may similarly prove that $d \in bS$.
\end{proof}

Our goal is to construct a group from a suitable $k$-monoid.
We shall do this by going via inverse monoids.

Let $S$ be a $k$-monoid which is finitely aligned.
Let $R_{1}$ and $R_{2}$ be right ideals of $S$.
A function $\theta \colon R_{1} \rightarrow R_{2}$ is called a {\em morphism}
if $\theta (as) = \theta (a)s$ for all $a \in R_{1}$ and $s \in S$.
A bijective morphism is called an {\em isomorphism}.
Define $\mathsf{R}(S)$ to be the set of all isomorphisms between
the finitely generated right ideals of $S$.

\begin{lemma}\label{lem:king} Let $S$ be a finitely aligned $k$-monoid.
Then $\mathsf{R}(S)$ is an inverse monoid.
\end{lemma}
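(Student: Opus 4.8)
The plan is to exhibit $\mathsf{R}(S)$ as an inverse submonoid of the symmetric inverse monoid $I(S)$ consisting of all partial bijections of the underlying set of $S$. Since $I(S)$ is the archetypal inverse monoid, and since any subsemigroup of an inverse semigroup that is closed under the inversion operation is itself an inverse semigroup, it will suffice to check three things: that $\mathsf{R}(S)$ contains the identity of $I(S)$, that it is closed under the inversion of $I(S)$, and that it is closed under the (partial) composition of $I(S)$. Associativity and the existence of inverses then come for free from the ambient monoid $I(S)$.

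The identity and inversion steps are routine. The identity map $1_{S} \colon S \to S$ is an isomorphism of the right ideal $S$, which is finitely generated (by $1$), so $1_{S} \in \mathsf{R}(S)$, and it is clearly the identity for composition. If $\theta \colon R_{1} \to R_{2}$ lies in $\mathsf{R}(S)$, then its set-theoretic inverse $\theta^{-1} \colon R_{2} \to R_{1}$ is again a bijection, and it is a morphism because, writing $b = \theta(a)$, the relation $\theta(as) = \theta(a)s$ reads off as $\theta^{-1}(bs) = \theta^{-1}(b)s$; since $R_{1}$ and $R_{2}$ are finitely generated, $\theta^{-1} \in \mathsf{R}(S)$.

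The substance lies in closure under composition. Given $\theta \colon R_{1} \to R_{2}$ and $\phi \colon R_{3} \to R_{4}$ in $\mathsf{R}(S)$, their composite in $I(S)$ has domain $\theta^{-1}(R_{2} \cap R_{3})$ and image $\phi(R_{2} \cap R_{3})$, sending $x \mapsto \phi(\theta(x))$. This composite is a morphism since $\phi(\theta(xs)) = \phi(\theta(x)s) = \phi(\theta(x))s$, and it is a bijection onto its image as a composite of bijections. What must be shown is that its domain and image are \emph{finitely generated} right ideals, and this rests on two observations. First, morphisms carry finitely generated right ideals to finitely generated right ideals: if $R = \bigcup_{i} a_{i}S$, then a morphism $\psi$ satisfies $\psi(a_{i}S) = \psi(a_{i})S$, so $\psi(R) = \bigcup_{i} \psi(a_{i})S$ is generated by the finitely many $\psi(a_{i})$; moreover preimages of right ideals under morphisms are again right ideals. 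Second --- and this is the one genuine obstacle, where the finitely aligned hypothesis is indispensable --- the intersection of two finitely generated right ideals is again finitely generated. To see this, write $R_{2} = \bigcup_{i} a_{i}S$ and $R_{3} = \bigcup_{j} b_{j}S$; then $R_{2} \cap R_{3} = \bigcup_{i,j}(a_{i}S \cap b_{j}S)$, and each term $a_{i}S \cap b_{j}S$ is empty or finitely generated by the definition of finite alignment, its generators being the finitely many $d$ with $\delta(d) = \delta(a_{i}) \vee \delta(b_{j})$ supplied by Lemma~\ref{lem:nelson}; a finite union of finitely generated right ideals is finitely generated.

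Combining these observations closes the argument: $R_{2} \cap R_{3}$ is a finitely generated right ideal contained in the domain of $\theta^{-1}$, so $\theta^{-1}(R_{2} \cap R_{3})$ is finitely generated, and likewise $\phi(R_{2} \cap R_{3})$ is finitely generated. Hence the composite lies in $\mathsf{R}(S)$, which is therefore an inverse submonoid of $I(S)$ and so an inverse monoid. As a sanity check on the inverse structure, one notes that the idempotents of $\mathsf{R}(S)$ are exactly the identity maps $1_{R}$ on finitely generated right ideals $R$, and these commute, since $1_{R}\,1_{R'} = 1_{R \cap R'} = 1_{R'}\,1_{R}$, in agreement with the standard characterization of inverse semigroups as the regular semigroups with commuting idempotents.
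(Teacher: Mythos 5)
Your proof is correct and follows essentially the same route as the paper: the paper's own argument likewise reduces the problem to the two facts that morphisms send finitely generated right ideals to finitely generated right ideals (via $\theta(XS)=\theta(X)S$) and that finite alignment makes intersections of finitely generated right ideals empty or finitely generated, leaving the remaining verification implicit. You have simply filled in the routine details (identity, inverses, composition inside the symmetric inverse monoid) that the paper omits.
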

\begin{proof} Let $\theta \colon R_{1} \rightarrow R_{2}$ be an isomorphism between finitely generated right ideals
and let $XS \subseteq R_{1}$ be a finitely generated right ideal.
We prove that $\theta (XS)$ is a finitely generated right ideal.
In fact, we claim that $\theta (XS) = \theta (X)S$.
Let $y \in \theta (XS)$.
Then $y = \theta (xs)$ for some $x \in X$ and $s \in S$.
Thus, using the fact that $\theta$ is a morphism,
we have that $y = \theta (x)s$.
It follows that $\theta (XS) \subseteq \theta (X)S$.
The reverse inclusion also follows from the fact that $\theta$ is a morphism.
By assumption, we have that the intersection of any two finitely generated right ideals is either empty
or again a finitely generated right ideal.
These two facts are enough to prove that $\mathsf{R}(S)$ is an inverse semigroup.
It is a monoid because $1S = S$ and so as a right ideal $S$ itself
is finitely generated.
\end{proof}

Observe that $\mathsf{R}(S)$ contains a zero,
and so we cannot simply define our group  to be 
$\mathsf{R}(S)/\sigma$, where $\sigma$ is the minimum group congruence \cite{Lawson1998},
because then the group will be trivial.
In order to contruct interesting groups, we need to focus on `large' elements of $\mathsf{R}(S)$.
What this means is delivered by the following definition.

Let $T$ be an inverse monoid with zero.
A non-zero idempotent $e$ of $T$ to be {\em essential} if for any non-zero idempotent
$f$ we have that $ef \neq 0$.
We say an element $a \in T$ is {\em essential} if both idempotents $a^{-1}a$ and $aa^{-1}$ are essential.
The {\em essential part} of $T$, denoted by $T^{e}$, is the set of all essential elements of $T$.
It is an inverse monoid; see \cite{Lawson2007b}.
The key point is that the zero is not an essential element and so does
not belong to the essential part of our inverse monoid.
We now locate the essential elements of  $\mathsf{R}(S)$.

\begin{lemma} Let $S$ be a finitely aligned $k$-monoid. 
Let $R = XS$ be a finitely generated right ideal of $S$.
Then the identity function on $R$ is an essential idempotent of  $\mathsf{R}(S)$
if and only if 
every element of $S$ is dependent on an element of $X$.
\end{lemma}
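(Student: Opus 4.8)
The plan is to translate the semigroup-theoretic notion of essentiality into a statement about intersections of principal right ideals, and then to recognise that statement as precisely the dependence condition. First I would recall how idempotents multiply in $\mathsf{R}(S)$. Since the elements of $\mathsf{R}(S)$ are isomorphisms between finitely generated right ideals, its idempotents are exactly the identity functions $1_{YS}$ on finitely generated right ideals $YS$, and the product of two such identities is the identity on the intersection of their domains, so that $1_{XS} \cdot 1_{YS} = 1_{XS \cap YS}$. By the finitely aligned hypothesis (which underpins Lemma~\ref{lem:king}), this intersection is again finitely generated, so the product lies in $\mathsf{R}(S)$, and it equals the zero of $\mathsf{R}(S)$ exactly when $XS \cap YS = \varnothing$. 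Thus the identity $1_R$ on $R = XS$ is essential if and only if $XS \cap YS \neq \varnothing$ for every non-zero idempotent, that is, for every non-empty finitely generated right ideal $YS$.

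The key simplification is to reduce this test from arbitrary finitely generated right ideals to principal ones. For the forward direction, each principal right ideal $aS$ with $a \in S$ is itself finitely generated and non-empty, so essentiality of $1_R$ immediately gives $XS \cap aS \neq \varnothing$ for every $a \in S$. For the converse, suppose $XS \cap aS \neq \varnothing$ for all $a \in S$. Any non-empty finitely generated right ideal $YS$ contains an element $y$, and then $yS \subseteq YS$ with $XS \cap yS \neq \varnothing$ by hypothesis, whence $XS \cap YS \neq \varnothing$. Therefore $1_R$ is essential if and only if $XS \cap aS \neq \varnothing$ for every $a \in S$.

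Finally I would unwind the definition of dependence to identify this with the stated condition. By definition, $a$ is dependent on $X$ precisely when $au = xv$ for some $x \in X$ and $u,v \in S$; such an equation says exactly that the common element $au = xv$ lies in $aS \cap xS \subseteq aS \cap XS$, and conversely any element of $aS \cap XS$ produces such an equation. Hence $a$ is dependent on $X$ if and only if $aS \cap XS \neq \varnothing$. Combining this with the previous paragraph shows that $1_R$ is essential if and only if every element of $S$ is dependent on some element of $X$.

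The routine bookkeeping is the identity-multiplication formula; the one step requiring a little care is the reduction to principal right ideals, since essentiality quantifies over \emph{all} non-zero idempotents, and one must verify that testing only against principal ideals loses nothing. That reduction is what makes the equivalence with the element-wise dependence condition clean, and I expect it to be the main point to get right.
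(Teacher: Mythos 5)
Your proof is correct and follows essentially the same route as the paper: both directions reduce essentiality of $1_R$ to the condition $aS \cap R \neq \varnothing$ for all $a \in S$ and then unwind the definition of dependence. You are in fact slightly more explicit than the paper on the converse, where you spell out why it suffices to test against principal right ideals (the paper leaves this reduction implicit), which is a welcome addition rather than a deviation.
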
 
\begin{proof} Suppose, first, that the identity function on $R$ is an essential idempotent.
Let $a \in S$ be any element.
Then $aS$ is a right ideal.
It follows that the identity function on $aS$ is an idempotent.
By assumption, this has a non-zero product with the identity function on $R$.
It follows that $aS \cap R \neq \varnothing$.
Thus there is some element $u \in S$ such that $au \in R$.
Whence, $au = xv$ for some $x \in X$ and $v \in S$.
The converse is proved from the observation that the element $a$ is dependent on an element of $X$
if and only if $aS \cap R \neq \varnothing$.
\end{proof}

We can define the group associated with the finitely aligned $k$-monoid $S$ as follows:

\begin{center}
\fbox{\begin{minipage}{8em}
{$\mathscr{G}(S) = \mathsf{R}(S)^{e}/\sigma.$}
\end{minipage}}
\end{center}

The only problem with this group is that we can, in general, say nothing about it.
So, we shall now define an apparently different group using generalized prefix codes.

\begin{lemma}\label{lem:grimm} Let $S$ be a finitely aligned $k$-monoid.
Then if $aS \cap bS \neq \varnothing$ the right ideal $aS \cap bS$ is generated by a finite generalized prefix code.
\end{lemma}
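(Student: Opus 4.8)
The plan is to produce an explicit generating set for $aS \cap bS$ directly from Lemma~\ref{lem:nelson}, to observe that this set is automatically a generalized prefix code, and finally to invoke finite alignment to conclude that it is finite. Since $aS \cap bS \neq \varnothing$, the elements $a$ and $b$ are comparable, so Lemma~\ref{lem:nelson} applies and yields
$$aS \cap bS = \bigcup_{\substack{d \in aS \cap bS,\\ \delta(d) = \delta(a) \vee \delta(b)}} dS.$$
I would set $D$ equal to this set of generators; then $aS \cap bS$ is by definition the right ideal generated by $D$, and the whole task reduces to showing that $D$ is a finite generalized prefix code.

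The crucial step --- and the one doing essentially all the work --- is the observation that two elements of $S$ of the same size that are comparable must be equal. I would prove this by taking $d, d' \in S$ with $\delta(d) = \delta(d')$ together with a common multiple $dx = d'y$, and applying the ``in particular'' clause of Lemma~\ref{lem:levi} (equal sizes force the two left factors to coincide) to conclude $d = d'$. Since every member of $D$ has size exactly $\delta(a) \vee \delta(b)$, this immediately shows that any two distinct elements of $D$ are incomparable, i.e. that $D$ is a generalized prefix code. Thus incomparability comes for free, with no further calculation.

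What remains is finiteness, and this is the only place the finite alignment hypothesis is used; I expect it to be the main (though mild) obstacle, since it requires reconciling an \emph{arbitrary} finite generating set with the canonical generators collected in $D$. Finite alignment lets me write $aS \cap bS = \bigcup_{i=1}^{n} c_i S$ for some $c_1, \ldots, c_n \in S$. Each $c_i$ lies in $aS \cap bS$, so Lemma~\ref{lem:nelson} factors it as $c_i = d_i t_i$ with $d_i \in D$. Then for an arbitrary $d \in D$, membership $d \in aS \cap bS = \bigcup_i c_i S$ places $d$ in some $c_i S \subseteq d_i S$; hence $d$ and $d_i$ are comparable and of equal size, and the crucial step of the previous paragraph forces $d = d_i$. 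Therefore $D \subseteq \{d_1, \ldots, d_n\}$ is finite, which completes the argument.
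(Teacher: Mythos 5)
Your proof is correct and follows essentially the same route as the paper's: both arguments rest on Lemma~\ref{lem:nelson} to produce generators of size $\delta(a)\vee\delta(b)$ and on Lemma~\ref{lem:levi} to conclude that comparable elements of equal size coincide, which gives incomparability. The only difference is organizational: the paper takes $D$ to be the (automatically finite) set of canonical factors $d_x$ of a finite generating set $X$ supplied by finite alignment, whereas you take $D$ to be the full set of elements of size $\delta(a)\vee\delta(b)$ in $aS\cap bS$ and then prove its finiteness afterwards by comparing it against those same $d_i$.
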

\begin{proof} By assumption, $aS \cap bS = XS$ where $X$ is a finite set.
By Lemma~\ref{lem:nelson},
for each $x \in X$ there exists an element $d_{x}$ such that $x = d_{x}t$ for some $t \in S$,
where $d_{x} \in aS \cap bS$ and $\delta (d_{x}) = \delta (a) \vee \delta (b)$.
Put $D = \{d_{x} \colon x \in X\}$.
This is a finite set since $X$ is a finite set.
We claim that $aS \cap bS = DS$.
By design, we have that $DS \subseteq aS \cap bS$.
If $s \in aS \cap bS$ then $s = xu$ for some $x \in X$ and $u \in S$.
It follows that $s = d_{x}tu$ and so $s \in DS$.
We have therefore proved that $aS \cap bS = DS$. 
But any two elements of $D$ have the same size.
Suppose that $d,d' \in D$ are comparable.
Then $dp = d'q$ for some $p,q \in S$.
By Lemma~\ref{lem:levi}, it follows that $d = d'$.
\end{proof}

Let $S$ be a finitely aligned $k$-monoid.
We shall now define a set $a \sqcup b$ when $a,b \in S$.
If $a,b \in S$ define $a \sqcup b = \varnothing$ if $a$ and $b$ are incomparable,
otherwise, define $a \sqcup b$ to be any finite generalized prefix code
such that $aS \cap bS = (a \sqcup b)S$; such exists by Lemma~\ref{lem:grimm}.   
The proof of the following is straightforward and can be found in \cite[lemma 3.24]{LSV}.

\begin{lemma}\label{lem:intersection-projective} Let $S$ be a finitely aligned $k$-monoid.
Then the intersection of any two right ideals generated by a finite generalized prefix code
is either empty or generated by a finite generalized prefix code.
\end{lemma}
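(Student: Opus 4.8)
The plan is to reduce everything to the pairwise case that Lemma~\ref{lem:grimm} already settles. Write the two ideals as $XS$ and $YS$, where $X$ and $Y$ are finite generalized prefix codes. Since right multiplication distributes over unions, I would first record that
$$XS \cap YS = \bigcup_{x \in X,\, y \in Y} (xS \cap yS).$$
If every one of these pairwise intersections is empty then so is $XS \cap YS$, and there is nothing to prove. Otherwise, for each comparable pair $(x,y)$ I would apply Lemma~\ref{lem:grimm} (equivalently, the operation $\sqcup$ defined above) to write $xS \cap yS = (x \sqcup y)S$, where $x \sqcup y$ is a finite generalized prefix code contained in $xS \cap yS$. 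Putting $Z = \bigcup_{x,y} (x \sqcup y)$, the union running over comparable pairs, yields a \emph{finite} set with $XS \cap YS = ZS$.

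It then remains to check that $Z$ is itself a generalized prefix code, and this is the step where the hypothesis that $X$ and $Y$ are generalized prefix codes does the real work. Suppose $d,d' \in Z$ are comparable, say $d \in x \sqcup y$ and $d' \in x' \sqcup y'$, and choose $c \in dS \cap d'S$. Since $d \in x \sqcup y \subseteq xS$ we have $dS \subseteq xS$, and likewise $d'S \subseteq x'S$, so $c \in xS \cap x'S$; as $X$ is a generalized prefix code this forces $x = x'$. The identical argument with $Y$ in place of $X$ (using $d \in yS$ and $d' \in y'S$) forces $y = y'$. Hence $d$ and $d'$ lie in the single finite generalized prefix code $x \sqcup y$, and being comparable they must coincide, $d = d'$. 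Thus distinct elements of $Z$ are incomparable, so $Z$ is a finite generalized prefix code generating $XS \cap YS$, which is exactly what is required.

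The only genuinely delicate point is the incomparability verification in the second paragraph, and I expect it to be the crux. The naive hope that every finite generating set can be thinned to a generalized prefix code is \emph{false}: in $\mathbb{N}^2$, for example, the ideal $(1,0)S \cup (0,1)S = \mathbb{N}^2 \setminus \{\mathbf{0}\}$ is generated by no generalized prefix code at all, so it is essential that we are intersecting ideals of prefix codes rather than merely taking their union. What rescues the intersection is that a common element of $dS \cap d'S$ propagates \emph{upward} into $xS \cap x'S$ and into $yS \cap y'S$; the prefix-code hypotheses on $X$ and $Y$ then pin the indices down to one and the same pair, collapsing comparability in $Z$ to comparability within a single $x \sqcup y$, which Lemma~\ref{lem:grimm} already controls.
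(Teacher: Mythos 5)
Your proof is correct; the paper does not actually write out a proof of this lemma (it defers to \cite[Lemma 3.24]{LSV}), and your argument --- decomposing $XS \cap YS$ into the pairwise intersections $xS \cap yS$, applying Lemma~\ref{lem:grimm} to each comparable pair, and then using the pairwise incomparability of distinct elements of $X$ and of $Y$ to pin down the indices and reduce comparability in $Z$ to comparability within a single code $x \sqcup y$ --- is exactly the ``straightforward'' argument being alluded to. No gaps.
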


We can now define our second group associated with a finitely aligned $k$-monoid.
It arose from unpublished work of John Fountain.
A finitely generated right ideal of $S$ is said to be {\em projective} if it is generated by a finite generalized prefix code.
Recall that in free monoids all right ideals are generated by prefix codes.
Let $S$ be a  finitely aligned $k$-monoid.
Define $\mathsf{P}(S)$ to be all the isomorphisms between the finitely generated right ideals generated
by generalized prefix codes.
By Lemma~\ref{lem:intersection-projective}, $\mathsf{P}(S)$ is also an inverse monoid.
This leads to our second group associated with $S$:
\begin{center}
\fbox{\begin{minipage}{8em}
{
$\mathscr{G}'(S) \cong \mathsf{P}(S)^{e}/\sigma.$
}
\end{minipage}}
\end{center}

We potentially have two groups associated with a finitely aligned $k$-monoid.
We shall now prove that for a natural class of $k$-monoids,
the two groups we have defined are, in fact, isomorphic. 

We need one further assumption on our $k$-monoid $S$.
We shall assume that $S$ is strict.
This assumption has an important consequence.
Let $\mathbf{m} \in \mathbb{N}^{k}$ be any element.
Define $C_{\mathbf{m}}$ to be all the elements $a$ of $S$ such that $\delta (a) = \mathbf{m}$.
Because $\delta$ is surjective, the set $C_{\mathbf{m}}$ is non-empty.
Suppose that $a,b \in C_{\mathbf{m}}$ are comparable.
Then $au = bv$ where $u,v \in S$.
But $a$ and $b$ have the same size and so $a = b$ by Lemma~\ref{lem:levi}.
Thus,  $C_{\mathbf{m}}$ is a generalized prefix code.
We now show that it is maximal.
Let $a$ be any element of $S$.
Then we can find an element $u$ such that $\delta (au) \geq \mathbf{m}$.
It follows by the (UFP) that $au = bv$ where $\delta (b) = \mathbf{m}$.
We have therefore proved the following.

\begin{lemma}\label{lem:existence} Let $S$ be a strict $k$-monoid.
Then for each $\mathbf{m} \in \mathbb{N}^{k}$ we have that $C_{\mathbf{m}}$ is a maximal generalized prefix code. 
\end{lemma}

The proof of the following is the key to proving that the two groups we have defined are isomorphic.

\begin{lemma}\label{lem:max} Let $S$ be a strict $k$-monoid.
Then every essential, finitely generated right ideal of $S$ contains a right ideal
generated by a finite maximal generalized prefix code.
\end{lemma}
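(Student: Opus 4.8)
The plan is to realise the required prefix code as a full ``level set'' of the size map, namely $C_{\mathbf{m}}$ for a carefully chosen $\mathbf{m}\in\mathbb{N}^{k}$. The point of this choice is that, by Lemma~\ref{lem:existence}, for \emph{every} $\mathbf{m}$ the set $C_{\mathbf{m}}$ is already a maximal generalized prefix code, so the ``generalized prefix code'' and ``maximal'' requirements come for free; the two genuine constraints are that $C_{\mathbf{m}}$ be \emph{finite} and that it be \emph{contained in the given ideal}. Writing the ideal as $R=XS$ with $X$ finite, the hypothesis that $R$ is essential means, by the criterion established above for the identity map on $XS$ to be an essential idempotent, that every element of $S$ is dependent on some element of $X$.

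To secure finiteness I would choose $\mathbf{m}$ coordinatewise. Let $I$ be the set of colours $i$ for which the alphabet $X_{i}=\delta^{-1}(\mathbf{e}_{i})$ is infinite and let $F$ be its complement. Put $\mathbf{m}_{i}=0$ for $i\in I$ and $\mathbf{m}_{i}=\max_{x\in X}\delta(x)_{i}$ for $i\in F$. Using the bijection of sets $S\cong S_{1}\times\cdots\times S_{k}$ coming from the unique factorisation $s=u_{1}\cdots u_{k}$ with $u_{l}\in S_{l}$, one computes $|C_{\mathbf{m}}|=\prod_{l}|X_{l}|^{\mathbf{m}_{l}}$, which is finite precisely because $\mathbf{m}_{l}=0$ on every colour carrying an infinite alphabet. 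So for this $\mathbf{m}$ the set $C_{\mathbf{m}}$ is a finite maximal generalized prefix code, and it remains only to prove $C_{\mathbf{m}}\subseteq R$.

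For the containment I would first localise to the finite-colour part. Let $S_{F}=\prod_{l\in F}S_{l}$ be the submonoid of elements whose size is supported on $F$, and set $X_{F}=X\cap S_{F}$. Since $\mathbf{m}$ is supported on $F$ we have $C_{\mathbf{m}}\subseteq S_{F}$; moreover if $z\in S_{F}$ lies in $xS$ then comparing sizes in the coordinates of $I$ forces $\delta(x)_{i}=0$ there, so $x\in X_{F}$, whence $R\cap S_{F}=X_{F}S_{F}$. Thus $C_{\mathbf{m}}\subseteq R$ is equivalent to $C_{\mathbf{m}}\subseteq X_{F}S_{F}$, which is just the (easy) finite-alphabet case: \emph{granted that $X_{F}$ is essential in $S_{F}$}, any $z\in C_{\mathbf{m}}$ is dependent on some $x\in X_{F}$, and since $\delta(z)=\mathbf{m}\geq\delta(x)$ Lemma~\ref{lem:levi} factors it as $z=xt$, giving $z\in X_{F}S_{F}$.

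Everything therefore reduces to the claim that the finite-colour part $X_{F}$ of $X$ is essential in $S_{F}$, and this is the step I expect to be the crux. The tool is a consequence of Lemma~\ref{lem:levi}: if $\delta(x)_{i}\geq1$ then the (UFP) gives $x$ a unique colour-$i$ initial atom $x_{1}$, and any colour-$i$ atom dependent on $x$ must equal $x_{1}$, so each generator is responsible for at most one colour-$i$ atom. For $i\in I$ there are infinitely many colour-$i$ atoms but only finitely many generators, so all but finitely many colour-$i$ atoms can be covered only by generators $x$ with $\delta(x)_{i}=0$. I would make this precise by a fresh-atom argument: given $w\in S_{F}$, prepend to $w$ a colour-$i$ atom, for each $i\in I$, that does not occur as the colour-$i$ initial atom of any generator, and apply essentiality of $X$ to the resulting element; the freshness forces the covering generator to be trivial in every coordinate of $I$, i.e.\ to lie in $X_{F}$, and reading off the resulting equation exhibits $w$ as dependent on that element of $X_{F}$. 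The delicate bookkeeping, and the main obstacle I anticipate, is handling several infinite colours simultaneously, since prepending an atom of one colour can transform the initial atoms of another colour through the square relations of Lemma~\ref{lem:squares}; I would handle this either by treating the colours of $I$ one at a time, or by choosing the prepended atoms generically so that, after all the relevant transforms, the initial atoms that appear still avoid the finite forbidden set.
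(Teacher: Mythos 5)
Your core mechanism is exactly the paper's: realise the code as a level set $C_{\mathbf{m}}$ of $\delta$, get maximality from Lemma~\ref{lem:existence}, and get containment from essentiality plus Lemma~\ref{lem:levi} (from $zu=xv$ and $\delta(z)=\mathbf{m}\geq\delta(x)$ conclude $z=xt$). The paper simply takes $\mathbf{m}=\bigvee_{x\in X}\delta(x)$ and stops, leaving the finiteness of $C_{\mathbf{m}}$ unaddressed; finiteness is automatic only when every alphabet $X_{l}$ with $\mathbf{m}_{l}\geq 1$ is finite. So when all alphabets are finite your argument collapses to the paper's and is complete, and you have correctly spotted that infinite alphabets are a genuine issue which the paper glosses over. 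Your repair --- zeroing $\mathbf{m}$ on the infinite colours --- is a sensible idea, and the reduction to the claim that $X_{F}$ is essential is carried out correctly.

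The problem is that this claim, which you yourself flag as the crux, is not proved, and the fresh-atom sketch does not close even for a single infinite colour $i$. From $(cw)u=xv$ with $c$ a fresh colour-$i$ atom you correctly deduce $\delta(x)_{i}=0$; but ``reading off the resulting equation'' does not exhibit $w$ as dependent on $x$. Applying Lemma~\ref{lem:nelson} to the element $(cw)u=xv$ of $cS\cap xS$ produces $d=c\beta=x\gamma$ with $\delta(\beta)=\delta(x)$ and $\delta(\gamma)=\mathbf{e}_{i}$, and cancelling $c$ from $c(wu)=c\beta t$ gives $wu=\beta t$: so $w$ is dependent on $\beta$, the transport of $x$ across a square relation, and $\beta$ need not belong to $X$ (in a direct product $\beta=x$, but not in a general $k$-monoid). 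Following this through with $\delta(w)=\mathbf{m}\geq\delta(\beta)$ yields $w\in\beta S$ and hence $cw=c\beta t'=x\gamma t'\in XS$ --- that is, your argument delivers $cw\in XS$ rather than the needed $w\in XS$. The multi-colour bookkeeping you mention is a second, further obstacle on top of this one. To have a complete proof you should either restrict to finite alphabets, where your argument and the paper's coincide, or supply an actual proof that $X_{F}$ generates an essential right ideal.
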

\begin{proof} Let $XS$ be an essential right ideal where $X$ is finite.
Let $\mathbf{m}$ be the join of the sizes of the elements of $X$;
this makes sense because $\mathbb{N}^{k}$ is a lattice.
We prove that $C_{\mathbf{m}}S \subseteq XS$.
Let $a \in C_{\mathbf{m}}$.
Since $XS$ is essential, there exist elements $u,v \in S$ such that
$au = xv$ for some $x \in X$.
By assumption, $\delta (a) \geq \delta (x)$.
Thus by Lemma~\ref{lem:levi}, we have that $a = xt$ for some $t \in S$.
We have proved that $C_{\mathbf{m}}S \subseteq  XS$.
\end{proof}

It can now be proved that that each element of $\mathsf{R}(S)^{e}$ extends an element of $\mathsf{P}(S)^{e}$ \cite[Lemma 7.9]{LV2020}.
This implies that the groups $\mathscr{G}'(S)$ and $\mathscr{G}(S)$ are isomorphic.
We now summarize what we have found.

\begin{theorem}\label{them:main-theorem} 
Let $S$ be a finitely aligned strict $k$-monoid. 
Then the groups $\mathsf{R}(S)^{e}/\sigma$ and $\mathsf{P}(S)^{e}/\sigma$
are isomorphic.
\end{theorem}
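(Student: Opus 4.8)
The plan is to realize both groups as maximum group images and to compare them through the evident inclusion $\mathsf{P}(S)\hookrightarrow\mathsf{R}(S)$, showing it becomes an isomorphism after quotienting by $\sigma$. Throughout I would use the standard facts about an inverse semigroup $T$: that $\sigma$ is the least group congruence (so $T/\sigma$ is a group and $\sigma$ is functorial, being the maximum group image); that $a\,\sigma\,b$ holds exactly when $a,b$ have a common lower bound in the natural partial order; that $a\le b$ forces $a\,\sigma\,b$; that anything below an idempotent is idempotent; and that the natural partial order is intrinsic, so for elements of a sub-inverse-monoid it may be computed in the submonoid or the overmonoid interchangeably.

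\textbf{The comparison map.} First I would build the homomorphism. Since a right ideal generated by a finite generalized prefix code is in particular finitely generated, there is an inclusion of inverse monoids $\mathsf{P}(S)\hookrightarrow\mathsf{R}(S)$, closure under products being exactly Lemma~\ref{lem:intersection-projective}. I claim it restricts to $\mathsf{P}(S)^{e}\to\mathsf{R}(S)^{e}$. The key observation is that essentiality is intrinsic for projective idempotents: the identity idempotent on a right ideal $R$ has nonzero product with the identity on $aS$ precisely when $R\cap aS\neq\varnothing$, and since each principal ideal $aS$ is generated by the one-element prefix code $\{a\}$, both conditions ``$R$ meets every finitely generated right ideal'' and ``$R$ meets every projective right ideal'' reduce to ``$R$ meets every $aS$''. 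Hence an element of $\mathsf{P}(S)$ is essential in $\mathsf{P}(S)$ iff it is essential in $\mathsf{R}(S)$, giving an inverse-monoid homomorphism $\iota\colon\mathsf{P}(S)^{e}\to\mathsf{R}(S)^{e}$; functoriality of $\sigma$ then yields a group homomorphism $\bar\iota\colon\mathscr{G}'(S)\to\mathscr{G}(S)$.

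\textbf{The restriction lemma (the crux).} The heart of the proof is: \emph{every $\theta\in\mathsf{R}(S)^{e}$ admits a restriction $\theta'\in\mathsf{P}(S)^{e}$ with $\theta'\le\theta$.} Write $\theta\colon XS\to YS$ with both $XS,YS$ essential. By Lemma~\ref{lem:max} there is a finite maximal generalized prefix code $F$ with $FS\subseteq XS$; by Lemma~\ref{lem:existence} maximal codes are essential, so $FS$ is essential and (being a finite prefix code) projective. Put $\theta'=\theta|_{FS}$, with range $\theta(F)S$ (morphism property, as in Lemma~\ref{lem:king}). The range is projective because an isomorphism of right ideals satisfies $\theta(a)S\cap\theta(b)S=\theta(aS\cap bS)$ and hence carries the incomparable family $F$ to an incomparable family $\theta(F)$. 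The delicate point is that $\theta(F)S$ is \emph{essential}; here is the computation I would use: given $a\in S$, essentiality of $YS$ gives $z\in aS\cap YS$, and since $z\in YS=\theta(XS)$ we may write $z=\theta(w)$ with $w\in XS$; essentiality of $FS$ inside $S$ gives $c\in wS\cap FS$, say $c=wu$; then $\theta(c)=\theta(w)u=zu\in aS\cap\theta(F)S$. Thus both idempotents of $\theta'$ are essential, so $\theta'\in\mathsf{P}(S)^{e}$ with $\theta'\le\theta$. (This is the content of \cite[Lemma 7.9]{LV2020}.)

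\textbf{Deducing the isomorphism, and the main obstacle.} For surjectivity of $\bar\iota$, given $\theta\in\mathsf{R}(S)^{e}$ take $\theta'\le\theta$ as above; then $\theta'\,\sigma\,\theta$, so $\bar\iota([\theta']_{\sigma})=[\theta]_{\sigma}$ and every class is hit. For injectivity I would show the kernel is trivial: if $\theta'\in\mathsf{P}(S)^{e}$ has $\iota(\theta')$ $\sigma$-equivalent to an idempotent, the common-lower-bound description of $\sigma$ produces an idempotent $\phi\le\iota(\theta')$ in $\mathsf{R}(S)^{e}$; restricting $\phi$ by the lemma gives $\phi'\in\mathsf{P}(S)^{e}$ with $\phi'\le\phi$, necessarily idempotent. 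Intrinsicality of the order gives $\phi'\le\theta'$ already in $\mathsf{P}(S)^{e}$, whence $\theta'\,\sigma\,\phi'$ there and $[\theta']_{\sigma}$ is the identity. Hence $\bar\iota$ is an isomorphism, which is precisely the stated equivalence of $\mathsf{R}(S)^{e}/\sigma$ and $\mathsf{P}(S)^{e}/\sigma$. I expect the real obstacle to be the restriction lemma---specifically, verifying that restricting an essential isomorphism to a projective essential subideal of its domain keeps the \emph{range} essential; the pull-back/push-forward step above is the crucial manoeuvre, and it genuinely exploits that $YS$, not merely $XS$, is essential.
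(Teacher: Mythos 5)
Your proposal is correct and follows essentially the same route as the paper: the paper's argument also rests on Lemma~\ref{lem:max} (via Lemma~\ref{lem:existence}) to show that every element of $\mathsf{R}(S)^{e}$ restricts to an element of $\mathsf{P}(S)^{e}$ --- the step it delegates to \cite[Lemma 7.9]{LV2020} and which you have reconstructed, including the push-forward argument showing the range of the restriction stays essential --- and then deduces the isomorphism of the two maximum group images exactly as you do. Your write-up simply makes explicit the details the paper leaves to the citation.
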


We denote by $\mathscr{G}(S)$ the group guaranteed by the above theorem
and call it the {\em group associated with $S$.}
This group is intimately connected with the structure of
the finite, maximal generalized prefix codes on $S$.

We can now construct some groups using the above theorem.
Let $A_{1}, \ldots, A_{k}$ be $k$ non-empty alphabets.
Then $A_{1}^{\ast} \times \ldots \times A_{k}^{\ast}$
is a strict, singly aligned $k$-monoid by Corollary~\ref{cor:strict}. 
Accordingly, we may construct the group $\mathscr{G}(A_{1}^{\ast} \times \ldots \times A_{k}^{\ast})$.
When $k = 1$, we are back to the Thompson-Higman groups constructed in \cite{Lawson2007,Lawson2007b}.
When $A = A_{1}$ contains $2$ elements,
and we take the $n$-fold direct product $A^{\ast} \times \ldots \times A^{\ast}$
then the group $\mathscr{G}(A^{\ast} \times \ldots \times A^{\ast})$ is the group $nV$, the higher dimensional 
Thompson-Higman group of Matt Brin \cite{Brin}.\footnote{The group $G_{2,1}$ is often denoted by $V$ alone.}
See \cite{LV2020} for details.

We do not show this here, but the groups we have constructed also occur as the groups of units
of Boolean inverse monoids \cite{Wehrung}.
To prove this requires us to generalize the right-infinite strings over an alphabet $A$ to higher dimensions.
How this is done is described in \cite{LV2020, LSV}.

\section{Further generalizations}

This section is more exploratory in nature.
We have seen in part (3) of Lemma~\ref{lem:alg-properties} that the group of units of a $k$-monoid is trivial.
We may generalize $k$-monoids in the way described in \cite{LV2022} by allowing the group of units to be non-trivial.
Let $S$ be a monoid. 
We say that a monoid homomorphism $\lambda \colon S \rightarrow \mathbb{N}^{k}$ is a {\em size map}
if $\lambda^{-1}(\mathbf{0})$ is precisely the group of units of $S$.\\ 

\noindent
{\bf Definition. }A monoid $S$ is said to be a {\em generalized $k$-monoid} if there is a size map
$\delta \colon S \rightarrow \mathbb{N}^{k}$ satisfying the {\em weak factorization property (WFP)}:
if $\delta (x) = \mathbf{m} + \mathbf{n}$ then there exist elements $x_{1}$ and $x_{2}$ of $S$ 
such that $x = x_{1}x_{2}$ where $\delta (x_{1}) = \mathbf{m}$ and $\delta (x_{2}) = \mathbf{n}$
and, furthermore, if $x_{1}'$ and $x_{2}'$ are any elements such that $x = x_{1}'x_{2}'$ 
where $\delta (x_{1}') = \mathbf{m}$ and $\delta (x_{2}') = \mathbf{n}$ then
$x_{1}' = x_{1}g$ and $x_{2}' = g^{-1}x_{2}$ for some invertible element $g$.\\

Observe that $k$-monoids are simply the generalized $k$-monoids having a trivial group of units.
The following result tells us what generalized $1$-monoids look like.

\begin{proposition} Let $S$ be a monoid equipped with a size map $\lambda \colon S \rightarrow \mathbb{N}$.
Then the following are equivalent:
\begin{enumerate}
\item $S$ is a generalized $1$-monoid with respect to $\lambda$.
\item $S$ is equidivisible, and $a \in S$ is an atom if and only if $\lambda (a) = 1$.
\end{enumerate}
\end{proposition}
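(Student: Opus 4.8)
The plan is to prove the two implications separately, keeping $\lambda$ fixed as the given size map so that $\lambda^{-1}(0)$ is the group of units in both directions.

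For $(1)\Rightarrow(2)$, I would dispose of the atom characterization first. If $\lambda(a)=1$ then $a$ is a non-unit (units have size $0$), and any factorization $a=bc$ forces $\lambda(b)+\lambda(c)=1$, so one factor has size $0$ and is invertible; hence $a$ is an atom. Conversely an atom is a non-unit, so $\lambda(a)\geq 1$; if $\lambda(a)\geq 2$, then applying the (WFP) to $\lambda(a)=1+(\lambda(a)-1)$ yields a factorization $a=bc$ with $\lambda(b)=1$ and $\lambda(c)\geq 1$, both non-invertible, contradicting atomicity, so $\lambda(a)=1$. For equidivisibility I would run the argument of Lemma~\ref{lem:basic-UFP}(1) while tracking a unit: given $xy=uv$, use that $\mathbb{N}$ is linearly ordered to assume $\lambda(x)\geq\lambda(u)$, set $m=\lambda(u)$, and use the (WFP) to split $x=x't$ with $\lambda(x')=m$. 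Then $z:=xy=x'(ty)$ and $z=uv$ are two factorizations of $z$ whose first factors both have size $m$; the uniqueness clause of the (WFP) shows any two such factorizations differ by a unit, so there is a unit $g$ with $u=x'g$ and $v=g^{-1}(ty)$. Writing $s=g^{-1}t$ gives $x=us$ and $v=sy$, which is exactly equidivisibility (the symmetric case $\lambda(u)>\lambda(x)$ is handled the same way).

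For $(2)\Rightarrow(1)$ I must verify the (WFP), splitting it into existence and uniqueness. The key existence step is the claim that \emph{every element of positive size has an atom as a left divisor}, which I would prove by strong induction on $\lambda(x)$ using only the atom characterization: if $\lambda(x)=1$ then $x$ is itself an atom, while if $\lambda(x)\geq 2$ then $x$ is not an atom, so by definition $x=bc$ with $b,c$ non-invertible, whence $1\leq\lambda(b)<\lambda(x)$ and the induction hypothesis supplies an atom left divisor of $b$, hence of $x$. Iterating this, any $x$ with $\lambda(x)=N$ is a product of $N$ atoms (the trailing unit left over after $N$ peelings is absorbed into the last factor, since an atom times a unit again has size $1$ and so is an atom). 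Given $\lambda(x)=m+n$, I write $x=a_{1}\cdots a_{N}$ and set $x_{1}=a_{1}\cdots a_{m}$ and $x_{2}=a_{m+1}\cdots a_{N}$, reading empty products as the identity to cover $m=0$ or $n=0$; this gives the required factorization of the correct sizes.

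Uniqueness up to units is then where equidivisibility enters. Given $x=x_{1}x_{2}=x_{1}'x_{2}'$ with $\lambda(x_{1})=\lambda(x_{1}')=m$ and $\lambda(x_{2})=\lambda(x_{2}')=n$, equidivisibility produces a mediating $t$; in either alternative, applying $\lambda$ forces $\lambda(t)=0$, so $t$ is a unit, and reading off the equations yields a unit $g$ (equal to $t$ or $t^{-1}$) with $x_{1}'=x_{1}g$ and $x_{2}'=g^{-1}x_{2}$, which is precisely the (WFP) uniqueness clause. I expect the main obstacle to be the existence half of $(2)\Rightarrow(1)$, namely extracting atoms to factor an arbitrary element; it is worth emphasizing that this step rests purely on the atom characterization, whereas equidivisibility is exactly what is needed (and used only) to deliver uniqueness up to units. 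The remaining delicate bookkeeping is carrying the unit $g$ through the equidivisibility argument in $(1)\Rightarrow(2)$, but that is routine once the uniqueness clause of the (WFP) is invoked correctly.
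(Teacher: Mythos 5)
Your proof is correct, and the $(1)\Rightarrow(2)$ direction is essentially the paper's own argument (equidivisibility from the (WFP) plus its uniqueness clause, the atom characterization from size arithmetic). Where you genuinely diverge is in the uniqueness half of $(2)\Rightarrow(1)$. The paper establishes existence the same way you do (peeling off atoms by induction on $\lambda$), but for uniqueness it decomposes \emph{both} factors $d$ and $e$ of the competing factorization into atoms and then invokes an external result, \cite[Lemma 2.11 (2)]{Lawson2015}, which says that two atom factorizations of the same element are linked by a chain of interpolating units $g_{1},\ldots,g_{s-1}$; the required unit $g$ is then read off as $g_{m}$. You instead apply equidivisibility directly to the single equation $x_{1}x_{2}=x_{1}'x_{2}'$ and observe that the mediating element $t$ must have $\lambda(t)=0$, hence be a unit; this is shorter, self-contained, avoids the citation, and makes visible that equidivisibility is used precisely and only for the uniqueness clause. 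What the paper's longer route buys is the finer statement about atom factorizations itself, which it reuses later (e.g.\ in proving $S=X^{\ast}G$ for left cancellative generalized $1$-monoids). One small point of hygiene in your existence step: when $\lambda(x)=0$ the element $x$ is a unit but need not be the identity, so it is not an empty product of atoms; the case $m=n=0$ should be handled by taking $x_{1}=x$ and $x_{2}=1$ rather than by the empty-product convention. This is a one-line fix, not a gap.
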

\begin{proof} (1) implies (2).
We prove first that $S$ is equidivisible
(this follows the proof of \cite[Proposition 3.4 (2)]{LV2022}).
Suppose that $z = xy = uv$.
We have that $\lambda (z) = \lambda (x) + \lambda (y) = \lambda (u) + \lambda (v)$.
Suppose, first, that $\lambda (x) \geq \lambda (u)$.
Then  $\lambda (x) = \lambda (u) + p$, where $p$ is some integer.
By the (WFP), we may write $x = x_{1}x_{2}$ where $\lambda (x_{1}) = \lambda (u)$
and $\lambda (x_{2}) = p$.
We therefore have that
$z = x_{1}(x_{2}y) = uv$.
But $\lambda (x_{1}) =  \lambda (u)$ and so $\lambda (x_{2}y) = \lambda (v)$.
It follows that there is an invertible element $g$ such that
$u = x_{1}g$ and $v = g^{-1}x_{2}y$.
We have that $x = x_{1}x_{2} = ug^{-1}x_{2}$.
Put $t = g^{-1}x_{2}$.
Then $x = ut$ and $v = ty$.
The assumption that $\lambda (x) < \lambda (u)$
leads to $u = xt$ and $y = tv$, for some $t \in S$.
Thus $S$ is equidivisible.

Suppose that $\lambda (a) = 1$.
If $a$ is not an atom, then $a = bc$ where neither $b$ nor $c$ is invertible.
It follows that $\lambda (b) \geq 1$ and $\lambda (c) \geq 1$.
This is a contradiction.
It follows that $a$ is an atom.
Suppose that $a$ is an atom.
We prove that $\lambda (a) = 1$.
Suppose not.
Then $\lambda (a) \geq 2$.
It follows from the (WFP) that $a = a_{1}a_{2}$ 
where $\lambda (a_{1}) \geq 1$ and $\lambda (a_{2}) \geq 1$.
We have therefore factorized $a = a_{1}a_{2}$
where neither $a_{1}$ nor $a_{2}$ is invertible.

(2) implies (1).
We prove that $S$ is a generalized $1$-monoid with respect to $\lambda$.
Let $a \in S$ be any element.
If $a$ is invertible then $\lambda (a) = 0$.
Suppose that $a = gh = g_{1}h_{1}$ where $g,h,g_{1},h_{1}$ are all invertible.
Put $k = g^{-1}g_{1}$, an invertible element.
Then $gk = g_{1}$ and $k^{-1}h = h_{1}$.

We may therefore suppose that $a$ is not invertible.
Suppose that $a$ is an atom.
Then, by assumption $\lambda (a) = 1$.
Suppose that $a = a_{1}a_{2}$ where $\lambda(a_{1}) = 1$ and $\lambda (a_{2}) = 0$.
Then $a_{1}$ is an atom and $a_{2}$ is invertible.
Thus $a = a1 = a_{1}a_{2}$.
Now we use equidivibility.
$a_{1} = at$ and $1 = ta_{2}$
or 
$a = a_{1}t$ and $a_{2} = t1$, for some $t \in S$.
If we take the first case, both $a_{1}$ and $a$ are atoms and so $t$ is invertible.
It follows that $a_{1} = at$ and $a_{2} = t^{-1}1$. 

We may therefore assume that $\lambda (a) \geq 2$.
Since $b$ is not an atom, we may factorize $b = b_{1}b_{2}$
where neither $b_{1}$ nor $b_{2}$ is invertible.
Observe that $\lambda (b_{1}), \lambda (b_{2}) < \lambda (b)$.
It follows that we may write $a$ as a product of $\lambda (a)$ atoms.
Observe that the number of atoms that we can factor $a$ into must equal $\lambda (a)$.
Thus $a = a_{1} \ldots a_{s}$, a product of atoms, where $s = \lambda (a)$.
Suppose that  $s = m + n$.
Then we may factorize $a = bc$ where $b = a_{1} \ldots a_{m}$ and $c = a_{m+1} \ldots a_{s}$.
Suppose, now, that $a = de$ where $\lambda (d) = m$ and $\lambda (e) = n$.
Then we may write $d = d_{1} \ldots d_{m}$, all atoms, and $e = e_{m+1} \ldots e_{s}$, also all atoms.
Thus $a = d_{1} \ldots d_{m}e_{m+1} \ldots e_{s}$.
By \cite[Lemma 2.11 (2)]{Lawson2015}, there are invertible elements $g_{1},\ldots , g_{s-1}$ such that
$a_{1} = d_{1}g_{1}, \ldots, a_{m} = g_{m-1}^{-1}d_{m}g_{m}$,
and
$a_{m+1} = g_{m}^{-1}e_{s}, \ldots,  a_{s} = g_{s-1}^{-1}e_{s}$.
Thus $b = dg_{m}$ and $c = g_{m}^{-1}e$ where $g_{m}$ is invertible.   
\end{proof}

We can say a lot more about generalized $1$-monoids
in the case where we know they have atoms and are left cancellative  

\begin{lemma} Let $S$ be a left cancellative generalized $1$-monoid which is equipped with a surjective size map $\lambda \colon S \rightarrow \mathbb{N}$.
Let $X$ be a transversal of the $\mathscr{R}$-classes of the atoms.
Denote by $X^{\ast}$ the submonoid of $S$ generated by the elements of $X$. 
Denote by $G$ be the group of units of $S$.
Then $X^{\ast}$ is a free monoid and every element of $S$ can be written uniquely
as a product of an element of $X^{\ast}$ followed by an element of $G$.
\end{lemma}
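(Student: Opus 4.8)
The plan is to lean on two facts about a left cancellative monoid $S$ carrying a size map $\lambda$: a clean description of Green's relation $\mathscr{R}$, and the rigidity of factorizations into atoms. First I would record the $\mathscr{R}$ description. If $a \mathscr{R} b$ then $aS = bS$, so $a = bs$ and $b = at$ for some $s,t \in S$; substituting and cancelling on the left gives $ts = 1 = st$, so $s,t$ are units. Hence $a \mathscr{R} b$ precisely when $b = ag$ for some $g$ in the group of units $G$. Since by the preceding Proposition the atoms of $S$ are exactly the elements of $\lambda$-size $1$, and multiplication by a unit preserves size, every $\mathscr{R}$-class of atoms consists of atoms and meets the transversal $X$ in exactly one point. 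Consequently every atom $b$ can be written as $b = xg$ with $x \in X$ and $g \in G$, uniquely: $x$ is the $\mathscr{R}$-representative of $b$, and then $g$ is forced by left cancellation.

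Next I would prove that $X^{\ast}$ is free by verifying the (UFP) for $\lambda|_{X^{\ast}} \colon X^{\ast} \to \mathbb{N}$ and invoking Theorem~\ref{them:one}. Existence of a factorization of prescribed sizes is immediate from splitting a word $x_{1}\cdots x_{s} \in X^{\ast}$ at the right place. For uniqueness, suppose $w = uv = u'v'$ in $X^{\ast}$ with $\lambda(u) = \lambda(u')$; writing $u = x_{1}\cdots x_{m}$ and $u' = y_{1}\cdots y_{m}$ with all factors in $X$, the (WFP) in $S$ gives $u' = ug$ for some unit $g$, so $y_{1}\cdots y_{m} = x_{1}\cdots x_{m-1}(x_{m}g)$ are two factorizations of one element into atoms. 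Applying \cite[Lemma 2.11 (2)]{Lawson2015}, there are interleaving units $h_{1}, \ldots, h_{m-1}$ with $y_{1} = x_{1}h_{1}$, $y_{i} = h_{i-1}^{-1}x_{i}h_{i}$ for $1 < i < m$, and $y_{m} = h_{m-1}^{-1}(x_{m}g)$. Reading left to right, $y_{1} = x_{1}h_{1}$ gives $y_{1} \mathscr{R} x_{1}$; as both lie in $X$ this forces $y_{1} = x_{1}$ and then $h_{1} = 1$ by left cancellation, and an induction collapses every $h_{i}$ to the identity, yielding $x_{i} = y_{i}$ and finally $g = 1$. Thus the (UFP) holds, so $X^{\ast}$ is free, its free generators being its size-$1$ elements, namely $X$.

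Finally I would establish the factorization $S = X^{\ast}G$. For existence I induct on $s = \lambda(a)$: if $s = 0$ then $a \in G$ and $a = 1\cdot a$; if $s \geq 1$, the (WFP) splits $a = bc$ with $b$ an atom and $\lambda(c) = s-1$, and writing $b = x_{1}g_{1}$ with $x_{1} \in X$, $g_{1} \in G$ gives $a = x_{1}(g_{1}c)$ where $\lambda(g_{1}c) = s-1$, so the inductive hypothesis $g_{1}c = w'g'$ yields $a = (x_{1}w')g'$. For uniqueness, suppose $a = wg = w'g'$ with $w,w' \in X^{\ast}$ and $g,g' \in G$; comparing sizes gives $\lambda(w) = \lambda(w') =: s$, and writing $w = x_{1}\cdots x_{s}$, $w' = y_{1}\cdots y_{s}$ produces two atom factorizations $x_{1}\cdots x_{s-1}(x_{s}g) = y_{1}\cdots y_{s-1}(y_{s}g')$ of $a$. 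Exactly as above, \cite[Lemma 2.11 (2)]{Lawson2015} with the transversal property and left cancellation forces $x_{i} = y_{i}$ for all $i$ and $g = g'$, whence $w = w'$.

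The main obstacle is the uniqueness argument, which is identical in both halves of the statement. I expect the crux to be recognising that an expression such as $x_{1}\cdots x_{s-1}(x_{s}g)$ is genuinely a factorization into atoms, so that \cite[Lemma 2.11 (2)]{Lawson2015} applies, and then exploiting the defining property of the transversal $X$ — that $\mathscr{R}$-equivalent elements of $X$ coincide — to annihilate the interleaving units one at a time. Left cancellation is precisely what converts each equality $x_{i} = y_{i}h_{i}$ into $h_{i} = 1$.
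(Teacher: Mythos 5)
Your proposal is correct and follows essentially the same route as the paper: characterize $\mathscr{R}$ via left cancellation, use the transversal property together with left cancellation to kill the interleaving units one at a time, and push units to the right to get $S = X^{\ast}G$. The only cosmetic difference is that you package the rigidity of atom factorizations via \cite[Lemma 2.11 (2)]{Lawson2015} where the paper re-derives it by iterating the (WFP) directly, but these are the same argument.
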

\begin{proof} Our assumption that $\lambda$ is surjective means that there are atoms. 
It is easy to check that if $a$ is an atom and $g$ is any invertible element then both $ag$ and $ga$  are atoms.
Using left cancellation, it is easy to prove that if $a$ and $b$ are arbitrary elements then $a \, \mathscr{R} \, b$ if and only if $a = bg$.
Let $X$ be a transversal of the $\mathscr{R}$-classes of the atoms.
Denote by $X^{\ast}$ the submonoid of $S$ generated by the set $X$.
We may restrict $\lambda$ to $X^{\ast}$.
We shall prove that the (UFP) holds.
Let $a \in X^{\ast}$ such that $\lambda(a) = m$.
Then $a = x_{1} \ldots x_{m}$ where $x_{1}, \ldots, x_{m} \in X$ are atoms.
If $m = s + t$ then we may factorize $a = a_{1}a_{2}$ where $a_{1} = x_{1} \ldots x_{s}$
and $a_{2} = x_{s+1} \ldots x_{m}$ where $\lambda (a_{2}) = t$.
To prove uniqueness, 
suppose that $x_{1} \ldots x_{q} = y_{1} \ldots y_{q}$ where $x_{i},y_{i} \in X$ for all
$1 \leq i \leq q$.
Then by the (WFP), we have that
$x_{1} = x_{2}g_{1}$ and
$x_{2} \ldots x_{q} = g_{1}^{-1}y_{2} \ldots y_{q}$
where $g_{1}$ is an invertible element.
Now $x_{1}, x_{2} \in X$ and $x_{1} \mathscr{R} x_{2}$.
But $X$ is a transversal and so $x_{1} = x_{2}$
and by left cancellation $g_{1} = 1$.
Now repeat the above argument and we get the result.
It follows that $X^{\ast}$ is a free monoid on $X$.
Put $G$ equal to the group of units of $S$.
We claim that $S = X^{\ast}G$.
If $a$ is not invertible, then $a = a_{1} \ldots a_{m}$, a product of atoms.
Now $a_{1} = x_{1}g_{1}$ for some $x_{1} \in X$ and invertible element $g_{1}$.
Thus $a = x_{1}g_{1}a_{2} \ldots a_{m}$.
Now repeat and we eventually show that $a = x_{1} \ldots x_{m}g$.
Thus $S = X^{\ast}G$.
We now prove that every element of $S$ can be written uniquely as a product 
of an element in $X^{\ast}$ and $G$.
Without loss of generality, suppose that
$x_{1} \ldots x_{m} = y_{1} \ldots y_{m}g$ where
$x_{1}, \ldots, x_{m},y_{1},\ldots, y_{m} \in X$ and $g \in G$.
Using the (WFP), we have that
$x_{1} = y_{1}h$ and 
$x_{2} \ldots x_{m} = h^{-1}y_{2} \ldots y_{m}g$.
As before, we deduce that $x_{1} = y_{1}$ and $h$ is trivial.
By left cancellation we get that 
$x_{2} \ldots x_{m} = y_{2} \ldots y_{m}g$.
Repeating the above argument we get that $x_{i} = y_{i}$ for all $1 \leq i \leq m$
and $g = 1$.\end{proof}

In the terminology of \cite{Lawson2008}, 
we have proved that left cancellative generalized $1$-monoids with atoms 
are Zappa-Sz\'ep products of groups and free monoids;
or, equivalently, there is a {\em self-similar group action} of $G$ on the free monoid $X^{\ast}$.
In fact, as shown in  \cite{Lawson2008, Lawson2015}
every such generalized $1$-monoid can be constructed in this way.
In \cite{LV2022}, self-similar group actions were generalized from groups acting on free monoids
to {\em groups acting on $k$-monoids}.

\begin{example}\label{ex:av2}
{\em How do we construct concrete examples of generalized $k$-monoids?
Several infinite series of groups acting simply transitively on products of $k$ trees were constructed in \cite{RSV}.
In fact, each such group action can be used to construct an explicit example of a generalized $(k-1)$-monoid.
We sketch out here how an example of such a group acting simply transitively on a product of three trees
can be used to construct a generalized 2-monoid.
This is merely to whet the reader's appetite for using geometry to construct monoids.
The group $\Gamma$ constructed in \cite{RSV} acts on the product of three trees of valencies 4, 6, and 8, respectively, with one orbit.
 Its generators and relations are as follows:
$$
\Gamma = \left\langle
\begin{array}{c}
a_1,a_2 \\ 
b_1,b_2,b_3 \\
c_1,c_2,c_3,c_4
\end{array}
\ \left| 
\begin{array}{c}
a_1b_1a_4b_2,  \ a_1b_2a_4b_4, \  a_1b_3a_2b_1, \\ 

a_1b_4a_2b_3,  \ a_1b_5a_1b_6, \ a_2b_2a_2b_6 \\

a_1c_1a_2c_8, \ a_1c_2a_4c_4, \ a_1c_3a_2c_2, \ a_1c_4a_3c_3, \\
a_1c_5a_1c_6, \ a_1c_7a_4c_1, \ a_2c_1a_4c_6, \ a_2c_4a_2c_7 \\

b_1c_1b_5c_4, \
b_1c_2b_1c_5, \
b_1c_3b_6c_1, \\

b_1c_4b_3c_6, \
b_1c_6b_2c_3, \
b_1c_7b_1c_8, \\

b_2c_1b_3c_2, \
b_2c_2b_5c_5, \
b_2c_4b_5c_3, \\

b_2c_7b_6c_4, \
b_3c_1b_6c_6, \
b_3c_4b_6c_3
\end{array}
\right.\right\rangle.
$$
If we take a Euclidean square for each relation of $\Gamma$, write down the relation and its boundary, identify letters with the same labels respecting orientation and
fill in cubes as soon as there is a boundary of an empty  3-cube, we get a polyhedron $P$ such that $\Gamma$ is its fundamental group. 
We note that $P$ consists of 24 cubes.
The generators of $\Gamma$ and their inverses form three alphabets: 
$$
A=\{a_1^{\pm}, a_2^{\pm}\}
\quad
B=\{b_1^{\pm}, b_2^{\pm},  b_3^{\pm}\}
\quad
C=\{c_1^{\pm}, c_2^{\pm},  c_3^{\pm}, c_4^{\pm}\}.$$
As was already mentioned in Example~\ref{ex:av1},
a group acting simply transitively on a product of $k$ trees leads to a $k$-monoid.
The relations of $\Gamma$ containing the elements of the alphabets of $B$ and $C$ alone define a 2-monoid $M$. 
The elements of the alphabet $A$ act on $M$ according to the cubes of $P$. 
For example, $a_{1}(c_{1}) = c_{4}$ and $a_{1}^{-1}(b_{1}^{-1}) = b_{2}^{-1}$,
see the following picture:

\begin{figure}[h!]
 \centering
  \includegraphics[width=7.5cm, angle=-90]{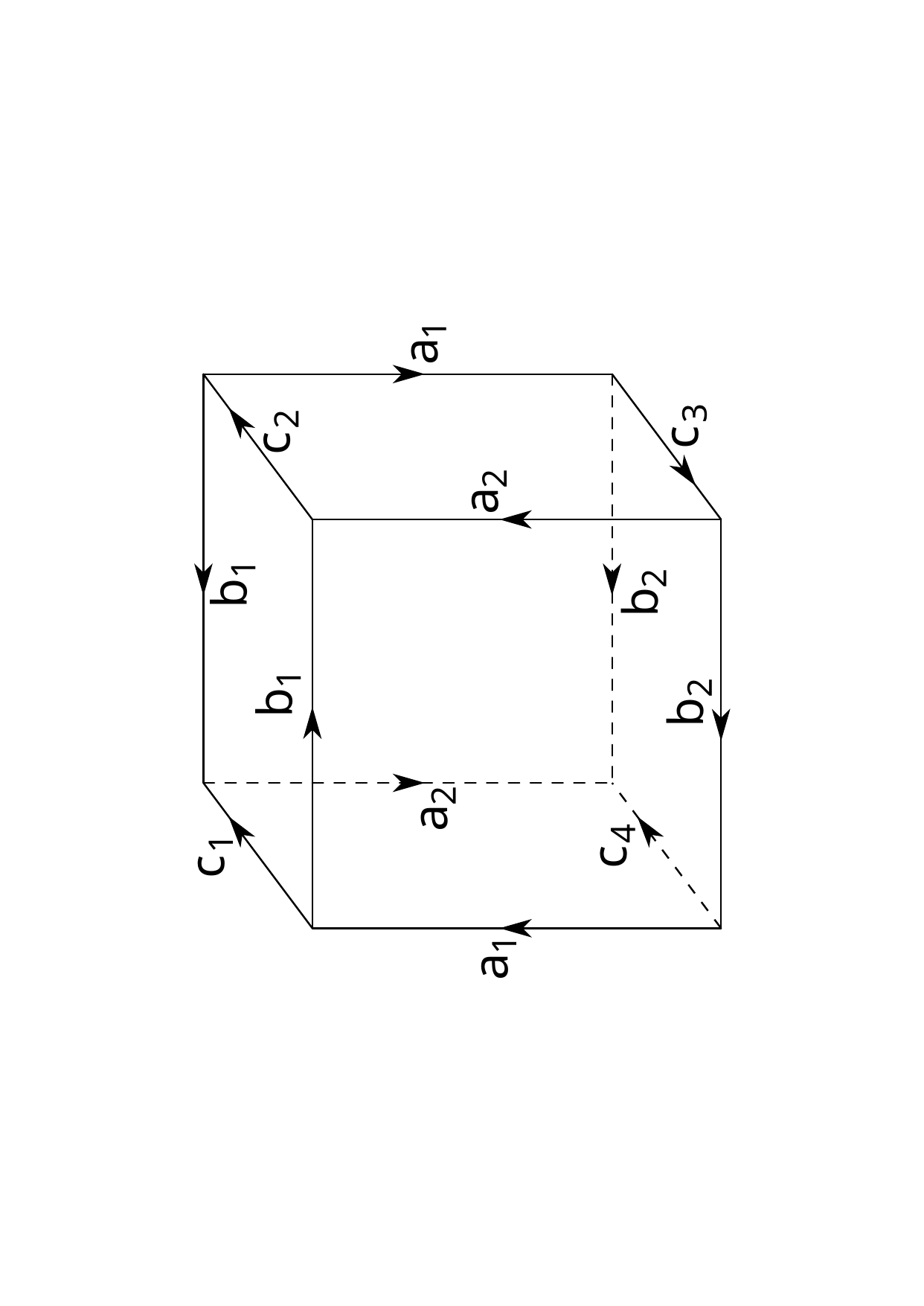}
\end{figure}

The action of $A$ on $M$ is well-defined,
 so
there is a group $G$ acting on $M$ in a self-similar way.
This leads to a generalized $2$-monoid.
However, we do not know what the group $G$ is in this case. 
The approach adopted in this example can be applied to any of the groups constructed in \cite{RSV}.
It would be interesting to know which groups of actions on $(k-1)$-monoids arise in this way.
}
\end{example}



\end{document}